\numberwithin{equation}{section}
\newtheorem{theorem}{Theorem}[section] 
\newtheorem{lemma}[theorem]{Lemma}     
\newtheorem{proposition}[theorem]{Proposition}
\theoremstyle{definition}
\newtheorem{definition}[theorem]{Definition}
\newtheorem{remark}[theorem]{Remark}
\newcommand {\C}{\mathbb C}
\newcommand {\T}{\mathbb T}
\newcommand {\N}{\mathbb N}
\newcommand {\Z}{\mathbb Z}
\newcommand {\B}{\mathcal B}
\newcommand{\supp}{{\rm supp\,}}
\newcommand{\eps}{\varepsilon}
\newcommand{\spn}{{\rm span \,}}
\newcommand{\1}{\mathbbm{1}}
\newcommand{\id}{\operatorname{id}}
\newcommand{\ann}{\operatorname{ann}}
\keywords{Group algebra, convolution operator, simplicity, ideal, dual Banach algebra, weak*-topology}
\subjclass[2020]{43A15, 43A20 (primary); 47L10, 46H10 (secondary).}
\begin{document}
	\title[Weak*-Simplicity of Convolution Algebras]{Weak*-Simplicity of Convolution Algebras on Discrete Groups}
	
		\author[J.\ T.\  White]{Jared T.\ White}
	\address{
		Jared T. White, School of Mathematics and Statistics, The Open University, Walton Hall, Milton Keynes MK7 6AA, United Kingdom}
	\email{jared.white@open.ac.uk}
	
	\date{November 2023}
	
	\maketitle
	
	\begin{center}
		\textit{The Open University}
	\end{center}
	
	\begin{abstract}
		We prove that, given a discrete group $G$, and $1 \leq p < \infty$, the algebra of $p$-convolution operators $CV_p(G)$ is weak*-simple, in the sense of having no non-trivial weak*-closed ideals, if and only if $G$ is an ICC group. This generalises the basic fact that $vN(G)$ is a factor if and only if $G$ is ICC. When $p=1$, $CV_p(G) = \ell^1(G)$. In this case we give a more detailed analysis of the weak*-closed ideals, showing that they can be described in terms of the weak*-closed ideals of $\ell^1(FC(G))$; when $FC(G)$ is finite, this leads to a classification of the weak*-closed ideals of $\ell^1(G)$.
	\end{abstract}
	
	\section{Introduction}
	Given a locally compact group $G$ and $1\leq p <\infty$ the algebra of $p$-convolution operators $CV_p(G)$ is defined to be the set of all bounded linear operators on $L^p(G)$ that commute with right translations by elements of $G$. When $G$ is discrete, $CV_p(G)$ may be identified with the set of those $\alpha \in \ell^p(G)$ with the property that $\xi \mapsto \alpha*\xi$ defines a bounded linear operator on $\ell^p(G)$. When $G$ is discrete and $p=1$ we recover the group algebra $\ell^1(G)$.
	The purpose of this article is to study the ideal structure of $CV_p(G)$ and $\ell^1(G)$, for a discrete group $G$. 
	
	
	Since $\ell^1(G)$ and $CV_p(G)$ are dual Banach algebras (in the sense of having a Banach space predual that is compatible with the algebra structure), it is natural to study their weak*-closed ideals. This continues a project of the author \cite{W3, W5} in which the weak*-closed left ideals of dual Banach algebras were studied, with particular focus on the measure algebra of a locally compact group, and the Banach algebra of operators on a reflexive Banach space.
	Our main theorem in this article is the following. 	We say that a dual Banach algebra is \textit{weak*-simple} if it has no proper, non-zero, weak*-closed ideals.
	
	\begin{theorem}		\label{thm1}
		Let $G$ be a group, and let $1 \leq p < \infty$. Then $CV_p(G)$ is weak*-simple if and only if $G$ is an ICC group.
	\end{theorem}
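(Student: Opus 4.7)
My plan is to prove the two directions of the equivalence separately, and in both cases the key object is the class sum $\eta_C = \sum_{h \in C} \delta_h$, which is central in $CV_p(G)$ whenever $C$ is a finite $G$-conjugacy class.

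For the forward direction (ICC implies weak*-simple), take a non-zero weak*-closed two-sided ideal $I \subseteq CV_p(G)$ and pick some $\alpha \in I \setminus \{0\}$. Since $I$ is a right ideal, by replacing $\alpha$ with $\alpha * \delta_{g_0}$ for a suitable $g_0 \in G$ I may assume $\alpha(e) \neq 0$. For every $h \in G$ the conjugate $\delta_h * \alpha * \delta_{h^{-1}}$ lies in $I$ and satisfies $(\delta_h * \alpha * \delta_{h^{-1}})(g) = \alpha(h^{-1} g h)$, so in particular its value at $e$ remains $\alpha(e)$. Let $K$ be the weak*-closed convex hull of the conjugation orbit $\{\delta_h * \alpha * \delta_{h^{-1}} : h \in G\}$, so $K \subseteq I$. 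The crux is a Dixmier-style averaging: for ICC groups the only central elements of $CV_p(G)$ are scalar multiples of $\delta_e$ (any class function in $\ell^p(G)$ must vanish on every infinite conjugacy class), and I argue that $K$ meets the centre, forcing $\alpha(e)\delta_e \in K \subseteq I$. Dividing by $\alpha(e)$ yields $\delta_e \in I$, hence $I = CV_p(G)$.

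For the reverse direction, suppose $G$ is not ICC and pick $g \neq e$ with $|C(g)| < \infty$. The element $\eta := \eta_{C(g)}$ is then a non-scalar central element of $CV_p(G)$, as $\eta(e) = 0$ while $\eta \neq 0$. The natural candidate for a non-trivial weak*-closed two-sided ideal is the kernel $J = \{\beta \in CV_p(G) : \eta * \beta = 0\}$: it is weak*-closed as the preimage of $\{0\}$ under the weak*-continuous map $\beta \mapsto \eta * \beta$, it is two-sided by centrality of $\eta$, and it is proper whenever it is non-zero (since $\delta_e * \eta = \eta \neq 0$). When instead $\eta$ acts injectively (as happens, for example, when $G = \Z$), one produces the ideal from the predual side: the presence of a non-scalar central element should always force the existence of a proper closed $CV_p(G)$-submodule of the predual, either via a spectral argument on $\eta$ or, for $p = 1$, via the $\ell^1(FC(G))$-based analysis promised in the introduction, whose annihilator is the desired ideal.

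The main obstacle is the Dixmier-averaging step in the forward direction for $p \neq 2$, where Hilbert space techniques, traces, and unitary functional calculus are unavailable. I would argue directly: given a finite set $F \subseteq G \setminus \{e\}$ and some $\varepsilon > 0$, use the ICC hypothesis to choose $h_1, \ldots, h_n \in G$ so that the conjugates $h_i^{-1} g h_i$ for $1 \leq i \leq n$ are pairwise distinct for each $g \in F$; then the average $\frac{1}{n}\sum_{i=1}^n \delta_{h_i} * \alpha * \delta_{h_i^{-1}}$ has coefficient $\alpha(e)$ at $e$ but small coefficients on $F$. A diagonal argument over growing $F$ and over a countable family of predual test functions should then produce a weak*-convergent net whose limit is $\alpha(e)\delta_e$. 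Making this rigorous in the specific weak* topology of $CV_p(G)$'s predual for arbitrary $p$ is the technical heart of the proof.
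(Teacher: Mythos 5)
The serious gap is in the reverse direction. When $G$ is not ICC you correctly identify a non-scalar central element, namely the class sum $\eta$ of a finite conjugacy class, but manufacturing a proper, non-zero, weak*-closed ideal from it is the hard half of the theorem, and your proposal does not do it. Your candidate $J = \{\beta : \eta * \beta = 0\}$ is indeed a weak*-closed two-sided ideal, but it is typically zero: this happens precisely in the difficult case where $FC(G)$ is torsion-free (already for $G = \Z$, as you note, every class sum is an invertible $\delta_g$). Your fallback --- that ``a non-scalar central element should always force a proper closed submodule of the predual'' --- is exactly the statement that needs proof, and it is genuinely delicate outside the von Neumann algebra setting: Proposition \ref{2.2} of the paper shows that the link between the centre and the weak*-closed ideals of a dual Banach algebra can break down, and for $p \neq 2$ there is no functional calculus extracting central idempotents from $\eta$. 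The paper's construction splits into two cases. If $FC(G)$ has torsion, Dicman's Lemma gives a finite normal subgroup $N$, and $\lambda_p(\tfrac{1}{|N|}\1_N)$ is a genuine central \emph{idempotent} (a single class sum is not), whose principal ideal is weak*-closed and proper. If $FC(G)$ is torsion-free, it contains a normal $N \cong \Z^n$, and the ideal is built as the annihilator of the normal dual module $\operatorname{Hom}^l_{CV_p(N)}(K, CV_p(G))$ for a module $K$ coming from a $G$-conjugation-invariant compact subset of $\T^n$ (Lemmas \ref{3.4}--\ref{3.7}); this detour through module annihilators is forced because for $p \neq 1,2$ it is not known whether $\spn \lambda_p(G)$ is weak*-dense in $CV_p(G)$, so one cannot certify that a weak*-closed subspace is an ideal merely by checking invariance under translations. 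None of this, nor any substitute for it, appears in your ``spectral argument on $\eta$''.

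The forward direction is closer to correct and takes a genuinely different route from the paper. Your coefficientwise averaging can be made rigorous: since $[G : C_G(x)] = \infty$ for all $x \neq e$, a covering argument (essentially Lemma \ref{icc2}, i.e.\ B.\ H.\ Neumann's lemma) lets you choose $h_1, \ldots, h_n$ lying in pairwise distinct cosets of $C_G(x)$ for every $x$ in a given finite set, whence $\frac{1}{n}\sum_i \alpha^{h_i}$ equals $\alpha(e)$ at $e$ and is $O(n^{-1/p})$ on that set by H\"older; a weak*-cluster point of this bounded net is then identified as $\alpha(e)\id_{\ell^p(G)}$ via the matrix coefficients $\langle \delta_t, T\delta_s \rangle$, which are weak*-continuous and determine $T$. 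No diagonal argument is needed, and this would give a single proof valid for all $1 \leq p < \infty$, whereas the paper uses the Ryll-Nardzewski fixed point theorem on the weakly compact set $\{S\delta_e : S \in X\} \subset \ell^p(G)$ in the reflexive range and a separate, more elaborate conjugation-limiting argument (Lemma \ref{icc4}) for $p = 1$. As written, however, the pivotal step --- ``I argue that $K$ meets the centre'' --- is asserted rather than proved, so even this half is incomplete until the averaging is carried out.
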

	
	\begin{proof}
		For $1<p<\infty$ apply Theorem \ref{3.8}; for $p=1$ apply Theorem \ref{iccthm}.
	\end{proof}
	
	 A von Neumann algebra is weak*-simple if and only if it is a factor. As such, Theorem \ref{thm1} can be seen as a non-trivial generalisation one of the most basic facts about the group von Neumman algebra $vN(G)$ of a discrete group $G$: namely that it is a factor if and only if $G$ is an ICC group. 
	
	When $p=1$ and $G$ is discrete, we have $CV_p(G) = \ell^1(G) = M(G)$, so that Theorem \ref{thm1} falls into the context of theorems about ideals of the measure algebra $M(G)$ of a locally compact group $G$. The structure of all norm-closed ideals of $M(G)$, for an infinite locally compact group $G$, is generally considered intractable, and even the set of maximal ideals can be very wild.
	For example, the maximal ideal space of $M(\T)$ contains uncountably many disjoint copies of the Stone--{\v C}ech compactification of $\Z$ \cite[Proposition 7]{OW14}; see also \cite{OWG16}.
	However, the author has found that if one restricts attention to the weak*-closed ideals, one can obtain some interesting descriptive results. In \cite{W3} the weak*-closed left ideals of $M(G)$, for a compact group $G$, were classified, and this was later extended to coamenable compact quantum groups by Anderson-Sackaney \cite{AS}. In \cite{W5} a description was obtained of the weak*-closed maximal left ideals of $M(G)$, for $G$ belonging to a certain class of groups including the connected nilpotent Lie groups. 
	
	In the present work we commence a study of the weak*-closed two-sided ideals of $\ell^1(G)$. Theorem \ref{thm1} with $p=1$ answers the most fundamental question in this line of enquiry.	
	However, we are able to give more detailed account of the weak*-closed ideals of $\ell^1(G)$ than for $CV_p(G) \ (1<p<\infty)$. Our next result says that the 
	weak*-closed ideals of $\ell^1(G)$ are determined by what happens on the $FC$-centre $FC(G)$.
	
	\begin{theorem}		\label{thm2}
		Let $\mathcal{T}$ be a transversal for $FC(G)$ in $G$. The weak*-closed ideals of $\ell^1(G)$ are given by
		$$\bigoplus_{t \in \mathcal{T}} \delta_t*J,$$
		where $J$ is a weak*-closed ideal of $\ell^1(FC(G))$ that is invariant for the action of $G$ by conjugation.
	\end{theorem}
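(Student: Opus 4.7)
Let $H := FC(G)$. The plan is to set up a bijection between weak*-closed two-sided ideals $I$ of $\ell^1(G)$ and weak*-closed, $G$-conjugation invariant two-sided ideals $J$ of $\ell^1(H)$, given by $J \mapsto \bigoplus_{t \in \mathcal{T}} \delta_t*J$ and $I \mapsto I \cap \ell^1(H)$. The forward direction---that a given $J$ defines a weak*-closed two-sided ideal $I_J := \bigoplus_t \delta_t*J$, and that a given $I$ defines a $J$ of the required type---should be routine. For weak*-closedness of $I_J$, each summand $\delta_t*J$ is weak*-closed in $\ell^1(tH)$ as an isometric image of $J$, and $\ell^1$-direct sums of weak*-closed components in the coset decomposition $\ell^1(G) = \bigoplus_t \ell^1(tH)$ remain weak*-closed. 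The left ideal property of $I_J$ uses normality of $H$ to factor $gt = t'h$ with $t' \in \mathcal{T}$, $h \in H$, while the right ideal property invokes $G$-conjugation invariance of $J$ via $\delta_t*J*\delta_g = \delta_{tg}*(\delta_{g^{-1}}*J*\delta_g) = \delta_{tg}*J$. Verifying the properties of $J := I \cap \ell^1(H)$ is similarly direct, and the inclusion $\bigoplus_t \delta_t*J \subseteq I$ is immediate.

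The crux is the reverse inclusion $I \subseteq \bigoplus_t \delta_t*J$, which amounts to showing $f|_{tH} \in \delta_t*J$ for every $f \in I$ and $t \in \mathcal{T}$; after replacing $f$ by $\delta_{t^{-1}}*f \in I$, this reduces to a key lemma that $f|_H \in I$ for every $f \in I$. My approach is to construct a net of conjugates of $f$ converging weak* to $f|_H$. Index the net by pairs $(F, \eps)$ with $F \subseteq G$ finite and $\eps > 0$, directed by $F \uparrow G$ and $\eps \downarrow 0$. For each such $(F, \eps)$, I would choose $g_{F,\eps} \in G$ satisfying the centralizer condition $g_{F,\eps} \in \bigcap_{a \in F \cap H} Z_G(a)$ and the escape condition $|f(g_{F,\eps}^{-1}\, b\, g_{F,\eps})| < \eps$ for every $b \in F \setminus H$. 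Such an element exists because the centralizer condition defines a finite-index subgroup $K_F$ of $G$ (centralizers of $FC$-elements have finite index), while for each $b \in F \setminus H$ the ``bad'' set $\{g : |f(g^{-1}bg)| \geq \eps\}$ is a finite union of cosets of $Z_G(b)$, which has infinite index in $K_F$; hence these finitely many cosets cannot exhaust $K_F$, and the intersection of the conditions over $b \in F \setminus H$ remains non-empty.

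Each conjugate $\delta_{g_{F,\eps}} * f * \delta_{g_{F,\eps}^{-1}}$ lies in $I$ (since $I$ is two-sided) with $\ell^1$-norm $\|f\|_1$. I would verify pointwise convergence to $f|_H$ by cases. For $x \in H \cap \supp(f)$, eventually $x \in F$, so $g_{F,\eps}$ centralizes $x$ and the value is $f(x)$. For $x \in H \setminus \supp(f)$, any $a$ in the finite set $C_x \cap \supp(f)$ is eventually in $F$, so $g_{F,\eps}^{-1}\, x\, g_{F,\eps} = a$ would force $x = g_{F,\eps}\, a\, g_{F,\eps}^{-1} = a$, a contradiction; hence eventually $g_{F,\eps}^{-1}\, x\, g_{F,\eps} \in C_x \setminus \supp(f)$ and the value is zero. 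For $x \notin H$, eventually $x \in F$ and the value is bounded by $\eps$. Pointwise convergence plus the uniform $\ell^1$-bound yields weak*-convergence in $\ell^1(G) = c_0(G)^*$, and weak*-closedness of $I$ delivers $f|_H \in I$. The main obstacle is coordinating the centralizer and escape conditions simultaneously, which relies on the infinite-versus-finite index dichotomy at the heart of $H = FC(G)$, together with the slightly delicate case analysis for $x \in H \setminus \supp(f)$.
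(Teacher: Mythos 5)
Your overall architecture is the same as the paper's: the correspondence $J\mapsto\bigoplus_{t\in\mathcal{T}}\delta_t*J$ and $I\mapsto I\cap\ell^1(FC(G))$ is exactly Lemma~\ref{0.1} together with Proposition~\ref{idealthm}, and everything reduces, as you say, to the key lemma that $f\in I$ implies $f|_{FC(G)}\in I$, which is Lemma~\ref{icc4}. Where you genuinely diverge is in the proof of that lemma. The paper fixes a finite set $A$ approximating $\supp f$ and processes the infinite conjugacy classes meeting $A$ one at a time: at each stage it conjugates by elements centralising $W=A\cap FC(G)$ so as to push the support of the current class off every finite set, takes a weak* limit, and accumulates an $\eps/2^i$ error, finally letting $\eps\to 0$. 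The group-theoretic engine is Proposition~\ref{icc3}, whose proof needs the covering Lemma~\ref{icc2} only for cosets of conjugates of a \emph{single} subgroup, because all the centralisers involved at a given stage are conjugate (the elements in question lie in one conjugacy class). You instead build a single net of conjugates $f^{g_{F,\eps}}$ converging weak* directly to $f|_{FC(G)}$, imposing simultaneously a centralising condition on $F\cap FC(G)$ and the escape condition $|f(g^{-1}bg)|<\eps$ for all $b\in F\setminus FC(G)$. This is cleaner: no iteration, no error bookkeeping, no appeal to norm-closedness at the end, and your pointwise-plus-uniform-bound verification of weak* convergence (including the case analysis for $x\in FC(G)\setminus\supp f$) is correct.

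The one step you assert without adequate justification is the existence of $g_{F,\eps}$. You must show that the finite-index subgroup $K_F$ is not covered by finitely many cosets of the subgroups $C_G(b)$, $b\in F\setminus FC(G)$. Since these $b$ may lie in several distinct infinite conjugacy classes, the centralisers $C_G(b)$ need not be conjugate to one another, so Lemma~\ref{icc2} does not apply as stated; what you need is the full B.~H.~Neumann covering lemma (no group is a finite union of cosets of infinite-index subgroups), applied after translating the putative covering of $K_F$ to a covering of $G$ by cosets of conjugates of the $C_G(b)$. That statement is true and classical, but your stated reason --- that each $C_G(b)$ has infinite index, ``hence these finitely many cosets cannot exhaust $K_F$'' --- is not a proof: the infinitude of each index separately does not rule out a finite covering without Neumann's inductive argument, which is precisely the kind of argument the paper takes care to supply (Lemma~\ref{icc2}) in the special case it needs. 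Cite or prove that lemma and your argument is complete.
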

	
	In the special case that $|FC(G)|<\infty$ this leads to the following classification theorem. Given $\Omega \subset \widehat{FC(G)}$ we write
	$$K(\Omega) : = \bigcap_{\pi \in \Omega} \ker \pi,$$
	which is an ideal of $\ell^1(FC(G)).$
	
	\begin{theorem}		\label{thm3}
		Suppose that $|FC(G)| < \infty$, and let $\mathcal{T}$ be a transversal for $FC(G)$ in $G$. The weak*-closed ideals of $\ell^1(G)$ are given by 
		$$\bigoplus_{t \in \mathcal{T}} \delta_t*K(\Omega),$$
		where $\Omega \subset \widehat{FC(G)}$ is a union of orbits for the action of $G$ on $\widehat{FC(G)}$ induced by conjugation.
	\end{theorem}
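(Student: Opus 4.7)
The plan is to combine Theorem \ref{thm2} with the Wedderburn decomposition of the finite-dimensional semisimple algebra $\ell^1(FC(G))$. By Theorem \ref{thm2}, it suffices to prove that a weak*-closed ideal $J$ of $\ell^1(FC(G))$ is invariant under the $G$-conjugation action if and only if $J = K(\Omega)$ for some $G$-invariant subset $\Omega \subseteq \widehat{FC(G)}$. Since $FC(G)$ is finite, $\ell^1(FC(G))$ is finite-dimensional, every subspace is weak*-closed, and from this point on the problem is purely algebraic.

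Next, I would invoke the Wedderburn decomposition, which via the Fourier transform identifies
$$\ell^1(FC(G)) \;\cong\; \bigoplus_{\pi \in \widehat{FC(G)}} M_{d_\pi}(\C),$$
each summand being a simple matrix algebra. It follows that the two-sided ideals of $\ell^1(FC(G))$ correspond bijectively to subsets $\Omega \subseteq \widehat{FC(G)}$, with $\Omega \mapsto K(\Omega) = \bigoplus_{\pi \notin \Omega} M_{d_\pi}(\C)$; equivalently, every two-sided ideal is the intersection of the kernels of a uniquely determined subfamily of irreducible representations, namely those in $\Omega$.

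Finally, I would verify equivariance. Conjugation by $g \in G$ restricts to an automorphism of $FC(G)$ and hence extends to an algebra automorphism $\sigma_g$ of $\ell^1(FC(G))$; dualising gives a natural $G$-action on $\widehat{FC(G)}$ by precomposition. A direct computation yields $\sigma_g(K(\Omega)) = K(g \cdot \Omega)$, so by the uniqueness from the preceding paragraph, $K(\Omega)$ is $G$-invariant if and only if $\Omega$ is a union of $G$-orbits. Combined with Theorem \ref{thm2}, this gives the claimed classification. There is no serious obstacle: the weak*-topological and infinite-dimensional content has been absorbed by Theorem \ref{thm2}, and what remains is bookkeeping in the representation theory of a finite group; the only mild care required is to fix the variance convention on $\widehat{FC(G)}$ so that the map $\Omega \mapsto K(\Omega)$ is $G$-equivariant on the nose.
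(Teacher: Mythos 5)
Your proposal is correct and follows essentially the same route as the paper: reduce via Theorem \ref{thm2} to classifying the $G$-conjugation-invariant ideals of the finite-dimensional algebra $\ell^1(FC(G))$, identify every such ideal as $K(\mathcal{O})$ for some $\mathcal{O} \subset \widehat{FC(G)}$ (the paper cites Hewitt--Ross, Theorem 38.7, which is exactly the Wedderburn-decomposition fact you invoke), and then check that $G$-invariance of the ideal corresponds to $\Omega$ being a union of orbits. The only cosmetic difference is that the paper saturates a given $\mathcal{O}$ to its orbit closure $\Omega$ and verifies $K(\mathcal{O}) = K(\Omega)$ directly, whereas you argue via the bijectivity of $\Omega \mapsto K(\Omega)$ and the equivariance $\sigma_g(K(\Omega)) = K(g\cdot\Omega)$; these are interchangeable.
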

	
	Unfortunately, a classification theorem for the weak*-closed ideals of $\ell^1(G)$ for a general group $G$ seems beyond reach, as one runs into very hard questions about spectral synthesis, as we shall explain below in Remark \ref{abrem}.
	
	The paper in organised as follows. Section 2 establishes our notation and states some basic results that we shall use throughout the article; in Subsection 2.4 we discuss the proof of Theorem \ref{thm1} and explain why we cannot simply adapt the proof available for $vN(G)$. In Section 3 we prove that if $\mathcal{A}$ is any weak*-closed algebra lying between $PM_p(G)$ and $CV_p(G)$, where $1<p<\infty$, then $\mathcal{A}$ is weak*-simple if and only if $G$ is ICC; this establishes Theorem \ref{thm1} for $1<p<\infty$. The rest of the paper is devoted to studying $\ell^1(G)$. In Section 4 we define a uniform version of the ICC condition on a group, and show that it is actually equivalent to the ordinary ICC condition. Section 5 opens with a discussion of the weak*-closed ideals of the $\ell^1$-algebra of an abelian group. Next, we prove that $\ell^1(G)$ is weak*-simple if and only if $G$ is ICC, which completes Theorem \ref{thm1}. At the end of Section 5 we prove Theorem \ref{thm2} and Theorem \ref{thm3}.

	\section{Notation and Preliminary Remarks}
	
	\subsection{General Notation and Group Theory}
	For the remainder of the article all groups are assumed to be discrete, unless otherwise stated. 
	
	 Let $G$ be a group. Given $g \in G$ we write $ccl_G(g)$ for the conjugacy class of $g$ in $G$, and write $C_G(g)$ for the centraliser of $g$ in $G$. Given $g, t \in G$ we write $g^t= tgt^{-1}$.  Moreover, for  $1\leq p < \infty$,  $\alpha \in \ell^p(G)$, and $t \in G$, we define $\alpha^t =\delta_t*\alpha*\delta_{t^{-1}}$, so that 
	$$\alpha^t(g) = \alpha(t^{-1}gt) \qquad (g \in G).$$	
	Similarly, we write $T^t = \lambda_p(t)T\lambda_p(t^{-1}) \ (t \in G, \ T \in CV_p(G))$ (where $\lambda_p$ denotes the left regular representation of $G$ on $\ell^p(G)$).
	
	We say that $G$ is an \textit{ICC group} if every non-trivial conjugacy class is infinite. We define the \textit{$FC$-centre} of $G$ to be 
	$$FC(G) = \{ t \in G : |ccl_G(t)| < \infty \}.$$
	It turns out that $FC(G)$ is always a normal subgroup of $G$. Let 
	$$G_T = \{ g \in G : g^n =e \text{ for some } n \in \N \}$$
	be the set of torsion elements of $G$. By \cite[Theorem 1.6]{T} $FC(G)_T$ is a normal subgroup of $G$ containing the derived subgroup $FC(G)'$, so that $FC(G)$ is always a torsion-by-abelian group.	
	
	Let $H$ be a subgroup of $G$. We write $[G:H]$ for the index of $H$ in $G$. By a \textit{left (right) transversal} we mean a set of left (right) coset representatives for $H$ in $G$. 
	When $H$ is normal we simply talk about a transversal for $H$ in $G$. 
	We say that $H$ is an \textit{$FC$-central subgroup} of $G$ if $H \subset FC(G)$. 
	
	Given a group $G$ we write $\widehat{G}$ for the unitary dual of $G$, that is, the set of equivalence classes of irreducible, continuous, unitary representations of $G$ on Hilbert spaces.  Note that, for any $N \lhd G$, conjugation by elements of $G$ induces an action of $G$ on $\widehat{N}$ given by
	$$(t \cdot \pi)(n)= \pi(t^{-1}nt) \quad (t \in G, \ n \in N, \  \pi \in \widehat{N}).$$	
	
	We now fix some set theory notation. Given sets $B \subset A$ we write $ \id_A$ for the identity map on $A$, and $\1_B \colon A \to \{ 0,1\}$ for the characteristic function of $B$. We write $B \subset \subset A$ to mean that $B$ is a finite subset of $A$. 
	
	Given $1 \leq p \leq \infty$ and $\xi \in \ell^p(A)$ we write $\xi|_B \in \ell^p(B)$ for the restriction of $\xi$ to $B$. In an abuse of notation we often regard $\ell^p(B)$ as a subset of $\ell^p(A)$ is the obvious way. Moreover we often identify $\xi|_B$ with its image inside $\ell^p(A)$, so that as a function on $A$ we have $\xi|_B(x) = \xi(x)$ if $x \in B$ and $\xi|_B(x) = 0$ otherwise.
	

	\subsection{Dual Banach Algebras}
	
	By a \textit{dual Banach algebra} we mean a pair $(\mathcal{A},\mathcal{A}_*)$, where $\mathcal{A}$ is a Banach algebra, 
	and $\mathcal{A}_*$ is a Banach space with $(\mathcal{A_*})^* = \mathcal{A}$, with the property that the multiplication on $\mathcal{A}$ is separately weak*-continuous. Examples include the measure algebra $M(G)$ of a locally compact group $G$, with predual $C_0(G)$, and hence in particular $\ell^1(G)$ for a discrete group $G$. Von Neumann algebras can be characterised as those C*-algebras that are dual Banach algebras. 
	
	Let $E$ and $F$ be Banach spaces. We write $\B(E,F)$ for the set of bounded linear operators from $E$ to $F$, and write $\B(E) := \B(E,E)$. The Banach space $\B(E,F^*)$ may be identified with $(F \widehat{\otimes} E)^*$, where $\widehat{\otimes}$ denotes the projective tensor product of Banach spaces, with the duality given by
	\begin{equation}		\label{eq2.2}
		\langle \sum_{j=1}^\infty y_j \otimes x_j, T \rangle = \sum_{j=1}^\infty \langle y_j, Tx_j \rangle \qquad (T \in \B(E,F^*), \ \sum_{j=1}^\infty y_j \otimes x_j \in F \widehat{\otimes} E ). 
	\end{equation}
	When $E$ is a reflexive Banach space, identifying $E$ with $E^{**}$ means that $\B(E)$ can be identified with $(E^* \widehat{\otimes} E)^*$, and this makes $\B(E)$ into a dual Banach algebra.
	
	Let $X$ be a subspace of $E$ and $Y$ a subspace of $E^*$. We define
	$$X^\perp = \{ \varphi \in E^* : \langle x, \varphi \rangle = 0, \ x \in X \} \quad \text{and} \quad
	Y_\perp = \{ x \in E : \langle x, \varphi \rangle = 0, \ \varphi \in Y \}.$$
	Note that $X^\perp$ is always weak*-closed and that $(Y_\perp)^\perp = \overline{Y}^{w^*}$. 
	It follows that a linear subspace of $E^*$ is weak*-closed if and only if it has the form $X^\perp$, for some $X \leq E$.
	
	A dual Banach algebra is \textit{weak*-simple} if it has no proper, non-zero, weak*-closed ideals. For example, for any reflexive Banach space $E$ the dual Banach algebra $\B(E)$ is weak*-simple since the ideal of finite rank operators is contained in every non-zero ideal and is weak*-dense. A von Neumann algebra is weak*-simple if and only if it is a factor. To the best of our knowledge, the term `weak*-simple' is new, although it is an obvious notion.
	
	Let $(\mathcal{A}, \mathcal{A}_*)$ be a dual Banach algebra, and let $E$ be a Banach left $\mathcal{A}$-module. Then $E^*$ becomes a Banach right $\mathcal{A}$-module via
	$$\langle x, \varphi \cdot a \rangle = \langle a \cdot x, \varphi \rangle \quad (x \in E, \ a \in \mathcal{A}, \ \varphi \in E^*),$$
	and we say that $E^*$ is a \textit{dual right $\mathcal{A}$-module}. We say that $E^*$ is \textit{normal} if the map $\mathcal{A} \to E^*$ given by
	$a \mapsto \varphi \cdot a$ is weak*-continuous for each $\varphi \in E^*$. Given a Banach right $\mathcal{A}$-module $F$ we define its annihilator to be
	$$\ann(F) : = \{ a \in \mathcal{A} : x \cdot a = 0 \ (x \in F)\}.$$
	When $F$ is a normal, dual right $\mathcal{A}$-module, $\ann(F)$ is a weak*-closed, two-sided ideal of $\mathcal{A}$. 
	Given Banach left $\mathcal{A}$-modules $E_1$ and $E_2$ we define $\operatorname{Hom}^l_{\mathcal{A}}(E_1, E_2)$ to be the set of bounded left $\mathcal{A}$-module homomorphisms from $E_1$ to $E_2$. As we shall detail in Section 3, $\operatorname{Hom}^l_{\mathcal{A}}(E_1,E_2)$ is itself a Banach right $\mathcal{A}$-module via the formula $(\Phi\cdot a)(x) = \Phi(x)\cdot a$, where $a \in \mathcal{A}, \ x \in E_1,$ and $\Phi \in \operatorname{Hom}^l_{\mathcal{A}}(E_1, E_2)$.
	

	\subsection{Convolution Operators on Groups}
		
	Banach algebras of $p$-convolution operators on groups have a long history of study going back at least to the work of Herz \cite{H71, H73} and Fig{\`a}-Talamanca \cite{FT65}. 
	They have also been the subject of serious study by a number of mathematicians since - see e.g. \cite{Co, De11, DFM, DG, GT22}. In recent years, the study of Banach algebras of $p$-convolution operators on groups has seen renewed interest in light of the program, initiated primarily by Phillips \cite{P12, P13}, to study so-called $p$-operator algebras; see \cite{G21} for a survey of this field. We now give the formal definitions of these algebras.
	
	Let $G$ be a group. Given $1\leq p < \infty$, we write $\lambda_p$ and $\rho_p$ for the left and right regular representations of $G$ on $\ell^p(G)$, respectively. We define the algebra of $p$-convolution operators on $G$ to be
	$$CV_p(G) = \rho_p(G)' = \{ T \in \B(\ell^p(G)) : T \rho_p(t) = \rho_p(t) T \ (t \in G) \}.$$
	In fact $CV_p(G) = \lambda_p(G)''$, and when $p=1$ we have $CV_p(G) = \ell^1(G)$. For $1<p<\infty$ we define the algebra of $p$-pseudomeasures on $G$ to be
	$$PM_p(G) = \overline{\spn}^{w^*} \lambda_p(G),$$
	where $w^*$ refers to the weak*-topology on $\B(\ell^p(G))$. 
	
	We always have $\lambda_p(\ell^1(G)) \subset PM_p(G) \subset CV_p(G)$.
	When $G$ has the approximation property, we have $PM_p(G) = CV_p(G)$ for all $1 < p<\infty$; this observation seems to be folklore, but a proof is given in \cite{DS}. Whether one always has $PM_p(G) = CV_p(G)$ for all $p$ and all $G$ remains a difficult unsolved problem. Our main theorem for $p \neq 1$, Theorem \ref{3.8}, is stated for any weak*-closed Banach algebra sitting between $PM_p(G)$ and $CV_p(G)$, so that our results do not depend the answer to this open question. 
	
	For $1<p<\infty$ the algebra $CV_p(G)$ is a dual Banach algebra, whose predual is denoted by $\overline{A_p}(G)$,
	a concrete description of which was given by Cowling \cite{Co}. Similarly, $PM_p(G)$ is a dual Banach algebra with predual $A_p(G)$, the \textit{Fig{\`a}-Talamanca--Herz algebra}. See \cite{De11} for background on $A_p(G)$. The weak*-topology on $CV_p(G)$ and $PM_p(G)$ corresponding to these preduals coincides with that given by considering them as subalgebras of $\B(\ell^p(G))$ with predual $\ell^{p'}(G) \widehat{\otimes} \ \ell^{p}(G)$. As such, by using \eqref{eq2.2}, we shall not need the precise definitions of $A_p(G)$ or $\overline{A_p}(G)$ in this article.
	
	We write $A(G) = A_2(G)$ for the Fourier algebra of $G$. We shall make frequent use of the basic properties of $A(G)$ for $G$ a compact abelian group, for which \cite{Ru} is a good reference. 
	
	In the next proposition we summarise some basic properties of $p$-convolution operators on discrete groups, for $p$ in the reflexive range, that we shall use throughout Section 3. We have been unable to find a reference, but the proof is a direct generalisation of the case $p=2$, which is given in \cite[\S 4.25]{SZ}.
	
	\begin{proposition}		\label{2.1}
		Let $G$ be a group, and let $1<p< \infty$. 
		\begin{enumerate}
			\item[(i)] Given $T \in CV_p(G)$ we have 
			$$T\xi = \alpha*\xi \quad (\xi \in \ell^p(G)),$$
			where $\alpha = T\delta_e$. 
			\item[(ii)] Moreover, we have 
			$$\langle \delta_t, T \delta_s \rangle = \alpha(ts^{-1}) \quad (s,t \in G) .$$
			\item[(iii)] Given $S,T \in CV_p(G)$, we have $S=T$ if and only if $S\delta_e = T\delta_e$; that is, $\delta_e$ is a separating vector for the action of $CV_p(G)$ on $\ell^p(G)$.
		\end{enumerate}
	\end{proposition}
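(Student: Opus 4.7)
The plan is to derive all three parts from the defining commutation relation, with (i) doing the real work and (ii), (iii) falling out as quick corollaries.

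For part (i), the key observation is that every point mass can be obtained from $\delta_e$ via the right regular representation: fixing the convention $(\rho_p(t)\xi)(g)=\xi(gt)$, one checks $\rho_p(s^{-1})\delta_e = \delta_s$. Since $T \in CV_p(G)$ commutes with $\rho_p(s^{-1})$, I would compute
$$T\delta_s = T\rho_p(s^{-1})\delta_e = \rho_p(s^{-1}) T\delta_e = \rho_p(s^{-1})\alpha,$$
and then evaluate pointwise to get $(\rho_p(s^{-1})\alpha)(g) = \alpha(gs^{-1}) = (\alpha*\delta_s)(g)$. Linearity then gives $T\xi = \alpha*\xi$ for every finitely supported $\xi \in \ell^p(G)$. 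To extend to all of $\ell^p(G)$, I would use density of the finitely supported functions together with boundedness of $T$: the right-hand side $\alpha*\xi$ exists a priori as the limit of $\alpha*\xi_n$ for any approximating sequence $\xi_n$ of finitely supported functions (since $T\xi_n$ converges to $T\xi$), and this defines convolution by $\alpha$ as a bounded operator agreeing with $T$.

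Part (ii) is then immediate: by (i), $T\delta_s = \alpha*\delta_s$, and pairing against $\delta_t$ extracts the value at $t$, which is $\alpha(ts^{-1})$. Part (iii) also follows at once: if $S\delta_e = T\delta_e$ then both $S$ and $T$ act as convolution by the common element $\alpha$, so $S=T$ on the dense subspace of finitely supported functions, hence everywhere by continuity.

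There is no substantive obstacle here; this is really a bookkeeping exercise around the commutant relation. The only mild care needed is to fix the convention for $\rho_p$ consistently with the formula $T\xi = \alpha*\xi$ in the paper, and to note that the fact that $\alpha*\xi$ actually defines a bounded map on $\ell^p(G)$ is a \emph{conclusion} of the argument (inherited from boundedness of $T$) rather than an input — which is exactly the point that distinguishes $CV_p(G)$ from a mere subset of $\ell^p(G)$.
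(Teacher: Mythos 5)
Your proof is correct and is exactly the standard commutant computation ($T\delta_s = T\rho_p(s^{-1})\delta_e = \rho_p(s^{-1})\alpha = \alpha*\delta_s$, then linearity and density) that the paper itself invokes: it gives no written proof but defers to the $p=2$ case in Str\u{a}til\u{a}--Zsid\'o \S 4.25, of which your argument is the direct generalisation. Your closing remark that boundedness of $\xi \mapsto \alpha*\xi$ is inherited from $T$ rather than assumed is the right way to handle the extension from finitely supported $\xi$ to all of $\ell^p(G)$.
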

	
	\noindent
	Using this proposition, we can characterise the centre of $CV_p(G)$.
	
	\begin{lemma}		\label{0.3}
		Let $G$ be a group. Then 
		$$Z(CV_p(G)) = \{ T \in CV_p(G) : T\delta_e \text{ is constant on conjugacy classes}\},$$
		and in particular $CV_p(G)$ has trivial centre if and only if $G$ is ICC.
	\end{lemma}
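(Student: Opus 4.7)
The plan is to pin down $Z(CV_p(G))$ by exploiting Proposition \ref{2.1}: since $\delta_e$ is separating for the action of $CV_p(G)$ on $\ell^p(G)$, a central element is determined by its value on $\delta_e$, and commutation with the translations $\lambda_p(t) \in CV_p(G)$ will force that value to be conjugation invariant.

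For the inclusion $\subseteq$, I would take $T \in Z(CV_p(G))$ and note that $\lambda_p(t) \in CV_p(G)$ for every $t \in G$ (since left translations commute with right translations), so $T\lambda_p(t) = \lambda_p(t) T$, i.e.\ $T^t = T$. Writing $\alpha = T\delta_e$, a short computation using the standard conventions $\lambda_p(s)\delta_r = \delta_{sr}$ and $\rho_p(s)\delta_r = \delta_{rs^{-1}}$ gives $(T^t\delta_e)(g) = \alpha(t^{-1}gt)$, so $T^t = T$ forces $\alpha$ to be constant on every conjugacy class.

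For the reverse inclusion, I would assume $\alpha = T\delta_e$ is a class function, take an arbitrary $S \in CV_p(G)$ with $\beta = S\delta_e$, and apply Proposition \ref{2.1}(i) to get $TS\delta_e = \alpha*\beta$ and $ST\delta_e = \beta*\alpha$. The substitution $h \mapsto ghg^{-1}$ in the defining sum of $\alpha*\beta$, combined with the conjugation invariance of $\alpha$, yields $\alpha*\beta = \beta*\alpha$. Proposition \ref{2.1}(iii) then promotes this equality at $\delta_e$ to the operator identity $TS = ST$, placing $T$ in the centre.

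For the ICC dichotomy, if $G$ is ICC then every non-trivial conjugacy class is infinite, so any class function in $\ell^p(G)$ must vanish off $\{e\}$, and the characterisation just obtained gives $Z(CV_p(G)) = \C\cdot\id$. Conversely, if some non-trivial conjugacy class $C$ is finite, then $\1_C \in \ell^1(G)$ convolves boundedly on $\ell^p(G)$ by Young's inequality, defining an element of $CV_p(G)$ that is a class function but is not a scalar multiple of $\delta_e$; by the characterisation this element is a non-scalar central element. The only step that genuinely requires care is the bookkeeping in the computation of $T^t\delta_e$, which is sensitive to the conventions adopted for $\lambda_p$ and $\rho_p$; beyond that the argument is a direct change of variables in a convolution sum.
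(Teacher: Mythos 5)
Your argument is correct and follows essentially the same route as the paper: conjugation by $\lambda_p(t)$ forces $T\delta_e$ to be a class function, and conversely a change of variables in the convolution sum together with the separating-vector property of $\delta_e$ (Proposition \ref{2.1}(iii)) gives centrality. The only difference is that you spell out the ICC dichotomy (class functions in $\ell^p$ vanish on infinite classes; $\1_C$ for a finite non-trivial class $C$ gives a non-scalar central convolver), which the paper leaves implicit; this detail is accurate and harmless.
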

	
	\begin{proof}
		Let $T \in Z(CV_p(G))$. Then given $t \in G$ we have $\lambda_p(t)T\lambda_p(t^{-1}) = T$, so that
		$$T\delta_e = \lambda_p(t)T\lambda_p(t^{-1})  \delta_e = \delta_t*(T\delta_e)*\delta_{t^{-1}}.$$
		Hence $T\delta_e$ is constant on conjugacy classes.
		
		Now suppose that $T \in CV_p(G)$ and that $\alpha :=T\delta_e$ is constant on conjugacy classes. Let $S \in CV_p(G)$, and let $\beta = S\delta_e$. Given $g \in G$ we have
		\begin{align*}
			\langle \delta_g, TS\delta_e \rangle &= (\alpha*\beta)(g) = \sum_{s \in G} \alpha(gs^{-1})\beta(s) = \sum_{s \in G} \alpha(s^{-1}(gs^{-1})s)\beta(s) \\
			&= \sum_{s \in G} \beta(s) \alpha(s^{-1}g) = (\beta*\alpha)(g) = \langle \delta_g, ST \delta_e \rangle.
		\end{align*}
		It follows that $TS=ST$, and so $T$ is central.
	\end{proof}
	
	Let $H$ be a subgroup of $G$. Then we can identify $CV_p(H)$ with a subalgebra of $CV_p(G)$ as follows. First, fix a right transversal $\mathcal{T}$ for $H$ in $G$, and define
	$\xi_t \in \ell^p(H)$ by $\xi_t(u) = \xi(ut) \ (u \in H, t \in \mathcal{T}).$
	Then $S \in CV_p(H)$ can be extended to an operator on $\ell^p(G)$ via
	\begin{equation}		\label{eq2.1}
	S\xi = \sum_{t \in \mathcal{T}} (S\xi_t)*\delta_t \qquad (S \in CV_p(H), \ \xi \in \ell^p(G)),
	\end{equation}
	and it can be checked that this extension belongs to $CV_p(G)$. The proof of this result is routine in the discrete case; see \cite[Section 7.1, Theorem 13]{De11} for a generalisation that holds for locally compact $G$.	
	
	Furthermore, given an operator $T \in CV_p(G)$ we may define its \textit{restriction to $H$}, written $T_H \in CV_p(H)$, by
	$$T_H \xi = P_H T \xi^G \qquad ( \xi \in \ell^p(H)),$$
	where $\xi^G$ denotes the obvious extension of $\xi$ to $G$ by  adding zeros, and $P_H \colon \ell^p(G) \to \ell^p(H)$ is the canonical projection. We can use this to give a right coset decomposition for $T$, by setting 
	$$T_t = (T \lambda_p(t^{-1}) )_H \qquad (t \in \mathcal{T}).$$
	Writing $\alpha = T\delta_e$, an easy calculation then shows that
	\begin{equation*}	
		T_t\delta_e = \alpha_t \qquad (t \in \mathcal{T}).
	\end{equation*}
	Unfortunately, $\sum_{t \in \mathcal{T}} T_t\lambda_p(t)$ does not usually converge to $T$ in the weak*-topology,
	 but we do have $\alpha = \sum_{t \in \mathcal{T}} \alpha_t*\delta_t$ convergent in $\ell^p$-norm.

	\subsection{Remarks on the Proof of Theorem \ref{thm1}}
	
	We shall break the proof of Theorem \ref{thm1} into two parts: the case $1<p<\infty$ is given by Theorem \ref{3.8}, whereas the case $p=1$ is given by Theorem \ref{iccthm}. Theorem \ref{3.8} actually states the analogous result for any weak*-closed subalgebra $\mathcal{A}$ of $CV_p(G)$ containing $PM_p(G)$.
	
	When $p=2$, we have $CV_p(G) = vN(G)$. For von Neumann algebras, weak*-simplicity is equivalent to being a factor. Indeed, by standard results a non-trivial weak*-closed ideal of a von Neumann algebra is generated by a non-trivial central idempotent; moreover whenever the centre of the von Neumann algebra is non-trivial it contains a non-trivial idempotent, and this generates a weak*-closed ideal. Since (as is well known) $vN(G)$ is a factor if and only if $G$ is ICC, Theorem \ref{thm1} follows in this special case.
	
	More generally, Lemma \ref{0.3} shows that $CV_p(G)$, for $1 \leq p < \infty$, has trivial centre if and only if $G$ is ICC; however, for $p \neq 2$ the connection between weak*-closed ideals and the centre of the algebra is less clear. Moreover, $CV_p(G)$ will not contain as many idempotents as $vN(G)$ when $p \neq 2$, so we should not expect every weak*-closed ideal to be generated by an idempotent. As an example, $\ell^1(\Z)$ contains many weak*-closed ideals (See Lemma \ref{yc} and Remark \ref{abrem}), but contains no non-trivial idempotents (by e.g. \cite[\S 3.2.1]{Ru}). 
	
	In the following proposition we observe that, in contrast to von Neumann algebras, a general dual Banach algebra can have a non-trivial weak*-closed ideal that intersects the centre only at $\{0\}$. By a \textit{weight} on a group $G$ we mean a function $\omega \colon G \to [1, \infty)$ 
	such that $\omega(e) = 1$ and $\omega(st) \leq \omega(s)\omega(t) \ (s,t \in G)$. Note that $\ell^1(G, \omega)$ is then a dual Banach algebra with predual $c_0(G, 1/\omega)$ (see e.g. \cite[Proposition 5.1]{D06}). 
	
	\begin{proposition}		\label{2.2}
		Let $G$ be a finitely-generated ICC group, fix a finite generating set, and denote corresponding the word-length of $t \in G$ by $|t|$.
		Let $c>1$ and let $\omega \colon G \to [1, \infty)$ be given by $\omega(t) = c^{|t|}$.
		Then the augmentation ideal 
		$$\ell_0^1(G, \omega) := \left\{ f \in \ell^1(G, \omega) : \sum_{t \in G} f(t) = 0 \right\}$$
		 is weak*-closed, but intersects the centre trivially.
		In particular $Z(\ell^1(G, \omega)) = \C \delta_e$, 
		but $\ell^1(G, \omega)$ is not weak*-simple. 
	\end{proposition}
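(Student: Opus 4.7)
The plan is to realise $\ell_0^1(G,\omega)$ as the kernel of a weak*-continuous character, and then observe that the centre of $\ell^1(G,\omega)$ is trivial purely because $G$ is ICC and the weight is bounded below by $1$; these two facts together give the proposition.

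First, I would verify that the augmentation map $\varepsilon \colon \ell^1(G,\omega) \to \C$ defined by $\varepsilon(f) = \sum_{t\in G} f(t)$ is weak*-continuous. Indeed, $\varepsilon$ is nothing other than pairing against the constant function $\1_G \in \C^G$, so the point is that $\1_G$ lies in the predual $c_0(G,1/\omega)$. This requires $1/\omega(t) \to 0$ (in the sense that $\{t : \omega(t)\leq M\}$ is finite for every $M$), which follows from $c>1$ together with the finite generation of $G$: the balls $\{t : |t|\leq n\}$ are finite. A direct convolution computation shows $\varepsilon$ is a character, so $\ell_0^1(G,\omega) = \ker\varepsilon$ is a weak*-closed, codimension-$1$, two-sided ideal, which is in particular proper and non-zero.

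Next I would compute the centre of $\ell^1(G,\omega)$. Exactly as in the proof of Lemma \ref{0.3} (using that $\delta_t \in \ell^1(G,\omega)$ for every $t$, since $\omega(t)<\infty$), one shows that $f \in Z(\ell^1(G,\omega))$ if and only if $f^t = f$ for all $t\in G$, i.e., $f$ is constant on conjugacy classes. If $C$ is a conjugacy class on which $f$ takes the common value $c_C$, then
\[
	\sum_{g\in C} |f(g)|\omega(g) = |c_C|\sum_{g\in C}\omega(g) \geq |c_C|\cdot |C|,
\]
since $\omega\geq 1$. For $f\in \ell^1(G,\omega)$ this forces $c_C=0$ whenever $|C|=\infty$. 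As $G$ is ICC, the only finite conjugacy class is $\{e\}$, so $f = c_e\delta_e$; thus $Z(\ell^1(G,\omega)) = \C\delta_e$.

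Combining the two: the intersection $Z(\ell^1(G,\omega))\cap \ell_0^1(G,\omega)$ consists of multiples of $\delta_e$ annihilated by $\varepsilon$, but $\varepsilon(\delta_e)=1$, so the intersection is $\{0\}$. Hence $\ell^1(G,\omega)$ is not weak*-simple, even though its centre is trivial, and the non-trivial weak*-closed ideal $\ell_0^1(G,\omega)$ meets the centre only in $0$. No step is really an obstacle here; the only care needed is to ensure the predual pairing makes sense for the constant function $\1_G$, which is where the hypotheses $c>1$ and finite generation of $G$ are used.
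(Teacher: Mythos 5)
Your proposal is correct and follows essentially the same route as the paper: identify $\ell_0^1(G,\omega)$ as the preannihilator-annihilator of the constant function $\1_G \in c_0(G,1/\omega)$ (equivalently the kernel of the weak*-continuous augmentation character), and then observe that a central element must be constant on conjugacy classes and summable, hence supported on $\{e\}$ since $G$ is ICC. Your explicit justification that $\1_G$ lies in the predual (via $c>1$ and finiteness of word-length balls) and your quantitative estimate $\sum_{g\in C}|f(g)|\omega(g)\geq |c_C|\,|C|$ are just slightly more detailed versions of the steps the paper leaves implicit.
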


	\begin{proof}
		Due to the choice of $\omega$ the constant functions $\C1$ are contained in $c_0(G, 1/\omega)$, and $\ell_0^1(G, \omega) = (\C 1)^\perp$.
		As such $\ell_0^1(G, \omega)$ is weak*-closed, and of course it is an ideal.
		Any element of the centre of $\ell^1(G,\omega)$ must be constant on conjugacy classes, and since it also belongs to $c_0(G)$, must be a multiple of $\delta_e$.
		Therefore $Z(\ell^1(G, \omega)) = \C \delta_e$, and clearly $\ell_0^1(G, \omega) \cap \C\delta_e = \{ 0 \}$.
	\end{proof}
	
	In Proposition \ref{3.1}(i) we shall show that, for $p$ in the reflexive range, every weak*-closed ideal of $CV_p(G)$ does contain a non-zero central element; we have not been able to see how to generalise this to $\ell^1(G)$, and our proof of Theorem \ref{thm1} in that case takes a different path. However, for $\ell^1(G)$ there are other nice tools available. In particular $\spn \{ \delta_t : t \in G \}$ is norm dense, whereas for $CV_p(G)$, where $p \neq 1,2$, we don't know in general whether $\spn \{ \lambda_p(t) : t \in G \}$ is even weak*-dense.
	

	\section{Weak*-Simplicity of $CV_p(G)$ and $PM_p(G)$ for $1<p<\infty$}
	
	Part (ii) of the following proposition is needed for our proof of Theorem \ref{thm1}. Part (i) may be of independent interest.
	It can be seen as an analogue of the following property of von Neumann algebras: 
	every non-zero weak*-closed ideal contains a non-zero central projection.
	
	\begin{proposition}		\label{3.1}
		Let $G$ be a group, let $1<p<\infty$, and let $\mathcal{A}$ be a weak*-closed subalgebra of $\B(\ell^p(G))$ with 
		$$PM_p(G) \subset \mathcal{A} \subset CV_p(G).$$
		Then 
		\begin{enumerate}
			\item[\rm (i)] every non-zero weak*-closed ideal of $\mathcal{A}$ has non-zero intersection with $Z(\mathcal{A})$;
			\item[\rm (ii)] if $G$ is ICC then $\mathcal{A}$ is weak*-simple.
		\end{enumerate}
	\end{proposition}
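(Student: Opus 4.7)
The plan is to derive (ii) from (i) up front: for $G$ ICC, the argument of Lemma \ref{0.3}, together with $\lambda_p(G) \subset \mathcal{A}$, will force $Z(\mathcal{A}) = \C I$, so any non-zero element of $J \cap Z(\mathcal{A})$ is a non-zero scalar multiple of $I$ and therefore gives $J = \mathcal{A}$. The substance of the argument is thus (i).

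I begin with a non-zero $T \in J$, set $\alpha := T\delta_e \in \ell^p(G)$, and after replacing $T$ by $\lambda_p(g^{-1})T \in J$ for some $g$ with $\alpha(g) \neq 0$, I may assume $\alpha(e) \neq 0$. The central element in $J$ will be produced as a weak* cluster point of convex combinations of the conjugates $T^t$, and its image under the separating map $S \mapsto S\delta_e$ should be the symmetrisation
$$\tilde\alpha(g) := \begin{cases} \dfrac{1}{|ccl_G(g)|}\sum_{h \in ccl_G(g)} \alpha(h), & |ccl_G(g)| < \infty,\\ 0, & \text{otherwise.} \end{cases}$$
This $\tilde\alpha$ lies in $\ell^p(G)$ by a H\"older estimate on each conjugacy class, is constant on conjugacy classes, and satisfies $\tilde\alpha(e) = \alpha(e) \neq 0$.

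The heart of the proof will be to show that $\tilde\alpha$ lies in the weak closure of $\mathrm{co}\{\alpha^t : t \in G\}$ in the reflexive space $\ell^p(G)$. Given a finite $F \subset G$ and $\eps > 0$, I shall split $F = F_1 \sqcup F_2$ according to whether the conjugacy class is finite or infinite; put $H := \bigcap_{g \in F_1} C_G(g)$, which is a finite-index subgroup of $G$ with right coset representatives $t_1, \ldots, t_M$; and then choose $s_1, s_2, \ldots \in G$ inductively so that for each $g \in F_2$, $k' < k$, and $i, i' \in \{1, \ldots, M\}$ one has $s_k \notin C_G(g) s_{k'} t_{i'} t_i^{-1}$. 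Such $s_k$ will exist by B.~H.~Neumann's lemma, because at each step the forbidden set is a finite union of cosets of subgroups of infinite index in $G$. The convex combination $\beta_L := \frac{1}{LM}\sum_{k \leq L,\,i \leq M} \alpha^{s_k t_i}$ should then agree with $\tilde\alpha$ on $F_1$---for each fixed $k$, since $H \subset C_G(g)$, the $i$-sum hits each element of $ccl_G(g)$ with multiplicity $M/|ccl_G(g)|$, giving the exact symmetrised value---and should satisfy $|\beta_L(g)| \leq L^{-1/p}\|\alpha\|_p$ on $F_2$ by a H\"older estimate, using that each element of $ccl_G(g)$ occurs at most $M$ times in the multiset $\{(s_k t_i)^{-1} g (s_k t_i)\}_{k,i}$ (the choice of $s_k$ rules out collisions across different $k$, and within a single $k$ the multiplicity is trivially at most $M$).

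To conclude, each $\beta_L$ will be the image under $\phi(S) := S\delta_e$ of the operator $S_L := \frac{1}{LM}\sum_{k,i} T^{s_k t_i} \in J$, and the net $\{S_L\}$ indexed by $(F, L)$ is bounded in norm by $\|T\|$. Banach--Alaoglu will then supply a weak* cluster point $\tilde T \in J$, and the weak*-weak* continuity and injectivity of $\phi$ (Proposition \ref{2.1}(iii)), combined with the reflexivity of $\ell^p(G)$, will yield $\tilde T \delta_e = \tilde\alpha$. Lemma \ref{0.3} will then place $\tilde T$ in $Z(\mathcal{A})$, and $\tilde\alpha(e) \neq 0$ will ensure $\tilde T \neq 0$, completing (i). I anticipate that the main obstacle will be the simultaneous combinatorial control on $F_1$ (exact evaluation of the symmetrisation) and $F_2$ (decay to zero), for which Neumann's lemma is essential to prevent conjugacy collisions across the infinite conjugacy classes.
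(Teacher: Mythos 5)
Your route is genuinely different from the paper's. The paper produces the central element abstractly: it forms $X=\{S\in I: \langle\delta_e,S\delta_e\rangle=1,\ \|S\|\le\|T\|\}$, pushes it forward under $S\mapsto S\delta_e$ to a weakly compact convex subset of $\ell^p(G)$ on which $G$ acts affinely and isometrically by conjugation, and invokes the Ryll--Nardzewski fixed point theorem to obtain a fixed point, which is constant on conjugacy classes; Lemma \ref{0.3} then yields both (i) and (ii) at once. You replace the fixed-point theorem with an explicit identification of the limit as the conjugacy-class symmetrisation $\tilde\alpha$ of $T\delta_e$, built by hand using B.~H.~Neumann's covering lemma to prevent collisions on the infinite classes. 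What you lose is brevity; what you gain is an explicit formula for a central element of the ideal (the paper only shows one exists) and an argument in the same spirit as the paper's own treatment of $\ell^1(G)$: Lemma \ref{icc2} is essentially the case of Neumann's lemma you need, and Lemma \ref{icc4} is the $\ell^1$ analogue of your averaging. Your deduction of (ii) from (i) is also sound, since any $T\in Z(\mathcal{A})$ commutes with $\lambda_p(G)\subset\mathcal{A}$, so $T\delta_e$ is constant on conjugacy classes, and for $G$ ICC this forces $T\in\C\,\id_{\ell^p(G)}$ by the separating vector property.

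There is, however, one step that fails as written: the claim that for fixed $k$ the average $\frac1M\sum_i\alpha\bigl((s_kt_i)^{-1}g(s_kt_i)\bigr)$ equals $\tilde\alpha(g)$ for $g\in F_1$. Writing $g'=s_k^{-1}gs_k$, this average is $\frac1M\sum_i\alpha(t_i^{-1}g't_i)$, and the map $t\mapsto t^{-1}g't$ is constant on right cosets of $C_G(g')=s_k^{-1}C_G(g)s_k$, not of $C_G(g)$. Your transversal $\{t_i\}$ is adapted to $H=\bigcap_{g\in F_1}C_G(g)$, which need not be contained in $C_G(g')$, so the $t_i$ need not equidistribute over the right cosets of $C_G(g')$; the resulting weighted average of $\alpha$ over $ccl_G(g)$ need not be the uniform one, and may even depend on $k$, so the limit need not be constant on conjugacy classes. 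The repair is easy: either replace $H$ by $\bigcap_{g\in F_1}\bigcap_{h\in ccl_G(g)}C_G(h)$, which is still of finite index because each $g\in F_1$ lies in $FC(G)$ and so $C_G(g)$ has only finitely many $G$-conjugates, and which is contained in $C_G(g')$ for every conjugate $g'$ of every $g\in F_1$; or conjugate in the other order, using $\alpha^{t_is_k}(g)=\alpha\bigl(s_k^{-1}(t_i^{-1}gt_i)s_k\bigr)$, so that the $t_i$-average over $ccl_G(g)$ is taken first and is then merely permuted by $s_k$ (the forbidden sets for the $s_k$ remain finite unions of cosets of infinite-index subgroups, so Neumann's lemma still applies). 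With that correction, and the standard remark that on your norm-bounded net weak convergence in $\ell^p(G)$ is equivalent to pointwise convergence, the argument goes through.
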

	
	\begin{proof}
		We shall prove (i) and (ii) together. Let $\{ 0 \} \neq I \lhd \mathcal{A}$ be weak*-closed. Let $T \in I \setminus \{ 0 \}$. By translating and scaling $T$ we may assume that $\langle \delta_e, T \delta_e \rangle = 1$. Let
		$$X = \{ S \in I : \langle \delta_e, S\delta_e \rangle = 1, \ \|S \| \leq \|T \| \},$$
		which is a non-empty, weak*-compact, convex subset of $\B(\ell^p(G))$.
		
		Let $\psi \colon \B(\ell^p(G)) \to \ell^p(G)$ be given by 
		$$\psi(S) = S \delta_e \quad (S \in \B(\ell^p(G))).$$
		Observe that $\psi$ is weak*-weakly continuous: indeed, if $\eta \in \ell^{p'}(G)$ then, for all $S \in \B(\ell^p(G))$, we have
		$\langle \psi(S), \eta \rangle = \langle S\delta_e, \eta \rangle 
		= \langle \eta \otimes \delta_e, S \rangle.$
		As such $Y : = \psi(X)$ is a weakly-compact convex subset of $\ell^p(G)$, as it is the continuous linear image of a weak*-compact convex set.
		
		Let $S \in \mathcal{A}$, and let $\alpha = S\delta_e = \psi(S).$
		The action of $G$ on $Y$ by conjugation is well-defined: indeed,
		given $t, g \in G$ we have
		$$\langle \delta_t, S^g \delta_e \rangle = \langle \delta_{g^{-1}t}, S\delta_g \rangle 
		= \langle \delta_{g^{-1}tg}, S\delta_e \rangle = \alpha^g(t),$$
		so that $\psi(S)^g = \psi(S^g) \in Y$. Moreover, the conjugation action of $G$ on $\ell^p(G)$ is isometric.
		
		By the previous paragraph, we my apply the Ryll-Nardzewski Fixed Point Theorem to the conjugation action of $G$ on $Y$ to find $y_0 \in Y$ such that $y_0^g = y_0$ for all $g \in G$. Say $y_0 = \psi(T_0)$, for $T_0 \in X$. 
		By Lemma \ref{0.3}, since $y_0 = T_0 \delta_e$ is constant on conjugacy classes, we must have $T_0 \in Z(\mathcal{A})$, and $T_0$ belongs to $I \setminus \{0 \}$ since it is in $X$, and this proves (i). If $G$ is ICC, then the fact that $y_0 \in \ell^p(G)$ is non-zero and constant on conjugacy classes forces $y_0 = \delta_e$, and hence $T_0 = \id_{\ell^p(G)}$ and $I = \mathcal{A}$. As $I$ was arbitrary, $\mathcal{A}$ must be weak*-simple, and we have proved (ii).
	\end{proof}

		We shall spend the remainder of Section 3 constructing a non-trivial weak*-closed ideal of $CV_p(G)$ when $FC(G) \neq \{ e\}$. Note that if $PM_p(G) = CV_p(G)$ this task becomes much easier, as we can adapt the approach that we shall use later for $\ell^1(G)$ (see Case 2 in the proof of Theorem \ref{iccthm}). The difficulty of the present situation comes from the fact that, if we don't know that $\lambda_p(G)$ has weak*-dense linear span in $CV_p(G)$, then we cannot check that a weak*-closed subset $I \subset CV_p(G)$ is an ideal just by checking that $\lambda_p(t)T, T \lambda_p(t) \in I$ for all $t \in G$ and $T \in I$. Instead, our strategy is to take the annihilator of a well-chosen normal, dual right $CV_p(G)$-module, as this will automatically be a weak*-closed ideal. A careful choice is required to ensure that the ideal is non-zero and proper.

	\begin{lemma}		\label{3.4}
		Let $(\mathcal{A}, \mathcal{A}_*)$ be a dual Banach algebra,  let $\mathcal{S}$ be a closed subalgebra of $\mathcal{A}$, and let $E$ be a Banach left $\mathcal{S}$-module.
		Then $\operatorname{Hom}^l_{\mathcal{S}}(E, \mathcal{A})$ is a normal, dual, right $\mathcal{A}$-module, with the action given by
		\begin{equation}		\label{eq4}
			(\Phi \cdot a)(x) = \Phi(x)a \quad (\Phi \in \operatorname{Hom}^l_{\mathcal{S}}(E, \mathcal{A}), \ a \in \mathcal{A}, \ x \in E).
		\end{equation}
	\end{lemma}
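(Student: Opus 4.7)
The plan is to realise $\operatorname{Hom}^l_{\mathcal{S}}(E, \mathcal{A})$ as a weak*-closed subspace of $\mathcal{B}(E, \mathcal{A})$ and then check module axioms and normality by pushing through separate weak*-continuity of multiplication in $\mathcal{A}$.

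First I would identify $\mathcal{B}(E, \mathcal{A})$ with $(\mathcal{A}_* \widehat{\otimes} E)^*$ via the pairing given in \eqref{eq2.2}, namely $\langle \phi \otimes x, T \rangle = \langle T x, \phi \rangle$ for $\phi \in \mathcal{A}_*$, $x \in E$. This puts a weak*-topology on $\mathcal{B}(E, \mathcal{A})$ in which, for fixed $x \in E$, the evaluation map $T \mapsto Tx$ is weak*-weak*-continuous from $\mathcal{B}(E, \mathcal{A})$ into $\mathcal{A}$.

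Next I would show that $\operatorname{Hom}^l_{\mathcal{S}}(E, \mathcal{A})$ is weak*-closed in $\mathcal{B}(E, \mathcal{A})$. For each fixed $s \in \mathcal{S}$ and $x \in E$ the map $T \mapsto T(s \cdot x)$ is weak*-continuous into $\mathcal{A}$ by the previous paragraph, and the map $T \mapsto s \cdot T(x)$ is the composition of $T \mapsto Tx$ with left multiplication by $s$ on $\mathcal{A}$, which is weak*-continuous by the dual Banach algebra axiom. Pairing the difference $T(s \cdot x) - s \cdot T(x)$ with an arbitrary $\phi \in \mathcal{A}_*$ therefore gives a weak*-continuous scalar functional, and $\operatorname{Hom}^l_{\mathcal{S}}(E, \mathcal{A})$ is the intersection of the kernels of all such functionals as $(s, x, \phi)$ ranges over $\mathcal{S} \times E \times \mathcal{A}_*$. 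By the characterisation of weak*-closed subspaces recalled in Subsection 2.2, it then follows that $\operatorname{Hom}^l_{\mathcal{S}}(E, \mathcal{A}) = X^\perp$ for some closed subspace $X \leq \mathcal{A}_* \widehat{\otimes} E$, so it is the dual of $(\mathcal{A}_* \widehat{\otimes} E)/X$.

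Finally I would verify the module structure and normality. That $\Phi \cdot a$, defined by \eqref{eq4}, again lies in $\operatorname{Hom}^l_{\mathcal{S}}(E, \mathcal{A})$ follows from $(\Phi \cdot a)(s \cdot x) = \Phi(s \cdot x) a = s \Phi(x) a = s (\Phi \cdot a)(x)$, and the module axioms and the bound $\|\Phi \cdot a\| \leq \|\Phi\| \|a\|$ are immediate. For normality, fix $\Phi$ and take a net $a_\alpha \to a$ weak* in $\mathcal{A}$. The weak*-topology inherited on $\operatorname{Hom}^l_{\mathcal{S}}(E, \mathcal{A})$ agrees with its own dual weak*-topology, so it suffices to test against elementary tensors $\phi \otimes x$ in $\mathcal{A}_* \widehat{\otimes} E$: we have $\langle \phi \otimes x, \Phi \cdot a_\alpha \rangle = \langle \Phi(x) a_\alpha, \phi \rangle$, which converges to $\langle \Phi(x) a, \phi \rangle$ by separate weak*-continuity of multiplication in $\mathcal{A}$. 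Hence $\Phi \cdot a_\alpha \to \Phi \cdot a$ weak*, proving normality.

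The only genuine content is the weak*-closedness step; everything else is a direct unwinding of definitions. The potential pitfall is in making sure the subspace weak*-topology from $\mathcal{B}(E, \mathcal{A})$ really is the intrinsic weak*-topology on $\operatorname{Hom}^l_{\mathcal{S}}(E, \mathcal{A})$ as a dual of the quotient predual, but this is standard once weak*-closedness is in hand.
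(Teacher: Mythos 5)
Your realisation of $\operatorname{Hom}^l_{\mathcal{S}}(E,\mathcal{A})$ as a dual space is essentially the paper's argument in different clothing: the intersection of the kernels of the functionals $T \mapsto \langle T(s\cdot x) - sT(x), \phi\rangle$ is precisely the annihilator $Z^\perp$ of the subspace $Z = \overline{\spn}\{(\phi\cdot s)\otimes x - \phi\otimes(s\cdot x)\}$ that the paper uses, and your closing remark about the subspace weak*-topology agreeing with the intrinsic one is the same identification $(\mathcal{A}_*\widehat{\otimes}E/Z)^*\cong Z^\perp$. One small omission: to get a \emph{dual} right module in the sense of Subsection 2.2 (rather than a dual space that happens to carry a module action) one should check that \eqref{eq4} is the adjoint of a left $\mathcal{A}$-action on the predual; the paper does this by noting that $Z$ is a submodule for $a\cdot(\phi\otimes x)=(a\cdot\phi)\otimes x$. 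This is routine but worth a line.

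The genuine gap is in the normality step. You take an \emph{arbitrary} weak*-convergent net $a_\alpha\to a$ and test $\Phi\cdot a_\alpha$ only against elementary tensors. The span of the elementary tensors is norm-dense in $\mathcal{A}_*\widehat{\otimes}E$ but is not all of it, and convergence of a net of functionals on a dense subspace yields weak*-convergence only when the net is uniformly bounded; a weak*-convergent net in $\mathcal{A}$ need not be bounded, so neither need $(\Phi\cdot a_\alpha)$. As written, ``it suffices to test against elementary tensors'' is therefore unjustified. You can repair this by invoking the Krein--\v{S}mulian theorem to reduce to bounded nets (on which your density argument does work), but the paper's route avoids nets entirely and is cleaner: for $\tau=\sum_{j}\phi_j\otimes x_j+Z$ in the predual one computes
$$(\tau\circ L_\Phi)(a)=\sum_{j}\langle \phi_j\cdot\Phi(x_j),a\rangle,$$
and since $\sum_j\|\phi_j\cdot\Phi(x_j)\|\leq\|\Phi\|\sum_j\|\phi_j\|\|x_j\|<\infty$ the series $\sum_j\phi_j\cdot\Phi(x_j)$ converges in $\mathcal{A}_*$ to an element representing $\tau\circ L_\Phi$, which is exactly the weak*-continuity of $a\mapsto\Phi\cdot a$.
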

	
	\begin{proof}
		Consider $\mathcal{A}_* \widehat{\otimes} E$, whose dual space may be identified with $\B(E, \mathcal{A})$ via \eqref{eq2.2}, and define
		$$Z = \overline{\spn}\{ (u \cdot b) \otimes x - u\otimes (b \cdot x) : u \in \mathcal{A}_*, \ x \in E, \ b \in \mathcal{S} \}.$$
		Then we have
		$$( \mathcal{A}_* \widehat{\otimes} E / Z )^* \cong Z^\perp,$$
		and a routine calculation shows that, as a subset of $\B(E, \mathcal{A})$,   
		$$Z^\perp = \operatorname{Hom}^l_{\mathcal{S}}(E, \mathcal{A}).$$
		As such $\operatorname{Hom}^l_{\mathcal{S}}(E, \mathcal{A})$ is a dual space.
		
		Now, $\mathcal{A}$ acts on $\mathcal{A}_* \widehat{\otimes} E$ on the left by 
		$$a \cdot \left( \sum_{j=1}^\infty u_j \otimes x_j \right) = \sum_{j = 1}^\infty (a \cdot u_j)\otimes x_j 
		\quad (a \in \mathcal{A}, \ u_j \in \mathcal{A}_*, \ x_j \in E),$$
		and $Z$ is a closed submodule for this action, so that $\mathcal{A}_* \widehat{\otimes} E / Z$ becomes a Banach left $\mathcal{A}$-module. A routine calculation shows that the dual action is exactly the action we defined on $\operatorname{Hom}^l_{\mathcal{S}}(E, \mathcal{A})$ by \eqref{eq4}.
		
		Finally, we show that $\operatorname{Hom}^l_{\mathcal{S}}(E, \mathcal{A})$ is normal. 
		Let $\Phi \in \operatorname{Hom}^l_{\mathcal{S}}(E, \mathcal{A})$, and define 
		$L_\Phi \colon \mathcal{A} \to \operatorname{Hom}^l_{\mathcal{S}}(E, \mathcal{A})$ by  $L_\Phi \colon a \mapsto \Phi \cdot a$. 
		We want to show that $L_\Phi$ is weak*-continuous, so take arbitrary $\tau \in \operatorname{Hom}^l_{\mathcal{S}}(E, \mathcal{A})_*$ and consider $\tau \circ L_\Phi$. 
		We can write $\tau$ as $\sum_{j=1}^\infty u_j \otimes x_j +Z$, 
		where $u_j \in \mathcal{A}_*$, $x_j \in E$, and $\sum_{j=1}^\infty \|u_j\| \|x_j \| < \infty.$ Given $a \in \mathcal{A}$, we have
		\begin{align*}
			(\tau \circ L_\Phi)(a) &= \langle \sum_{j=1}^\infty u_j \otimes x_j, \Phi \cdot a \rangle 
			= \sum_{j=1}^\infty \langle u_j, (\Phi \cdot a)(x_j) \rangle \\
			&= \sum_{j=1}^\infty \langle u_j, \Phi(x_j)a \rangle 
			= \sum_{j=1}^\infty \langle u_j \cdot \Phi(x_j), a \rangle.
		\end{align*}
		Note that $\sum_{j = 1}^\infty \| u_j \cdot \Phi(x_j) \| \leq \| \Phi \| \sum_{j=1}^\infty \|u_j\| \|x_j \| < \infty$, so that 
		$\sum_{j = 1}^\infty u_j \cdot  \Phi(x_j)$ converges to an element of $\mathcal{A}_*$ which we denote $u_0$. The above calculation shows that $(\tau \circ L_\Phi)(a) = \langle u_0, a \rangle$. Since $\tau$ was arbitrary, this proves that $L_\Phi$ is weak*-continuous.
	\end{proof}
	
	\begin{lemma}		\label{3.6}
		Let $G$ be a group, and let $N$ be an abelian, normal subgroup of $G$. Let $K$ be a Banach left $CV_p(N)$-module, and let $J$ be an ideal of $CV_p(N)$ that annihilates $K$, and that is invariant for the conjugation action of $G$ on $CV_p(N)$. 
		Write $M = \operatorname{Hom}^l_{CV_p(N)}(K, CV_p(G))$, and define 
		$$I := \ann(M) =  \{ T \in CV_p(G) : \Phi \cdot T = 0, \ \Phi \in \operatorname{Hom}^l_{CV_p(N)}(K, CV_p(G)) \}.$$
		Then $I$ is a weak*-closed ideal of $CV_p(G)$ containing $J$.
	\end{lemma}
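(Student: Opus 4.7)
The proof naturally splits into two parts: that $I$ is a weak*-closed ideal, and that $J \subset I$.

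For the first part, I would apply Lemma \ref{3.4} with $\mathcal{A} = CV_p(G)$ and $\mathcal{S} = CV_p(N)$, the latter viewed as a closed subalgebra of $CV_p(G)$ via the extension procedure \eqref{eq2.1} in Subsection 2.3. This immediately makes $M$ into a normal, dual, right $CV_p(G)$-module, and as noted in Subsection 2.2 the annihilator of any such module is automatically a weak*-closed two-sided ideal of $CV_p(G)$. So Part (i) is essentially bookkeeping.

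The substantive content is showing that $J \subset I$. Fix $S \in J$, $\Phi \in M$, and $x \in K$; by Proposition \ref{2.1}(iii) it suffices to check $(\Phi(x) S)\delta_e = 0$ in $\ell^p(G)$. Set $\alpha = \Phi(x)\delta_e$ and $\beta = S\delta_e \in \ell^p(N)$, fix a right transversal $\mathcal{T}$ for $N$ in $G$ with $e \in \mathcal{T}$, and decompose $\alpha = \sum_{t \in \mathcal{T}} \alpha_t * \delta_t$ in $\ell^p$-norm as at the end of Subsection 2.3. Since $\Phi(x)$ commutes with right translations and $N$ is normal, $\delta_t * \beta = \beta^t * \delta_t$, whence
\[
(\Phi(x) S)\delta_e \;=\; \alpha * \beta \;=\; \sum_{t \in \mathcal{T}} (\alpha_t * \beta^t) * \delta_t .
\]
The summands have disjoint supports in the distinct right cosets $Nt$, so vanishing of the left-hand side is equivalent to $\alpha_t * \beta^t = 0$ in $\ell^p(N)$ for every $t \in \mathcal{T}$.

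To obtain these identities I would exploit the $G$-invariance of $J$. Since $N$ is normal, the operator $S^t := \lambda_p(t) S \lambda_p(t^{-1})$ remains in $CV_p(N)$ (it acts as convolution by $\beta^t$), and lies in $J$ by invariance. As $J$ annihilates $K$, we have $S^t \cdot x = 0$, and the left $CV_p(N)$-module homomorphism property of $\Phi$ then gives $S^t \Phi(x) = \Phi(S^t \cdot x) = 0$ in $CV_p(G)$. Evaluating at $\delta_e$ translates this, via Proposition \ref{2.1}, to $\beta^t * \alpha = 0$ in $\ell^p(G)$; decomposing along cosets forces $\beta^t * \alpha_{t'} = 0$ for each $t' \in \mathcal{T}$, and commutativity of $CV_p(N)$ (since $N$ is abelian) then yields $\alpha_t * \beta^t = \beta^t * \alpha_t = 0$, as required.

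The conceptual obstacle, and the reason the hypotheses fit together so tightly, is that the module-homomorphism property directly provides only $S \Phi(x) = 0$ (a \emph{left} multiplication), whereas membership in $I$ requires $\Phi(x) S = 0$ (a \emph{right} multiplication). The mechanism that converts one into the other is to chop $\Phi(x)$ into coset pieces along $N$ and, on the coset labelled by $t$, use $S^t$ in place of $S$ — which is legitimate precisely because $J$ is $G$-invariant — and then invoke the commutativity of $CV_p(N)$. Both the normality of $N$ and its abelianness are genuinely used.
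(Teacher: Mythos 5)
Your proof is correct and follows essentially the same route as the paper's: Lemma \ref{3.4} supplies the weak*-closed ideal part, and the containment $J \subset I$ is obtained from the coset identity $\alpha * \beta = \sum_{t \in \mathcal{T}} \beta^t * \alpha_t * \delta_t$ (the paper's \eqref{eq3.1}) combined with the $G$-invariance of $J$, the left-module-homomorphism property of $\Phi$, the separating vector $\delta_e$, and the commutativity of $CV_p(N)$. The only cosmetic difference is that the paper first checks that each coset component $\Phi_t$ is itself a left $CV_p(N)$-module homomorphism and then applies $S^t$ to it, whereas you apply $S^t$ to $\Phi$ directly and restrict to the coset $Nt$ afterwards --- the same computation in a slightly different order.
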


	\begin{proof}
		Since (by Lemma \ref{3.4}) $I$ is the annihilator of a normal, dual, right $CV_p(G)$-module, it is easily seen to be a weak*-closed two-sided ideal. It remains to show that it contains $J$. 
				
		Let $\Phi \in M$. We shall show that $\Phi \cdot S = 0$ for every $S \in J$. 
		Define $\alpha \colon K \to \ell^p(G)$ by
		$$\alpha(v)(g) = \langle \delta_g, \Phi(v) \delta_e \rangle \quad (v \in K, \ g \in G).$$
		Fix a transversal $\mathcal{T}$ for $N$ in $G$, and for each $t \in \mathcal{T}$ define functions $\Phi_t \colon K \to CV_p(N)$ and $\alpha_t \colon K \to \ell^p(G)$  by 
		$$\Phi_t(v) = [\Phi(v)]_t \qquad ( t \in  \mathcal{T}, \ v \in K),$$ 
		and
		$$\alpha_t(v)(g) = \alpha(v)(gt) \ (v \in K, \ g \in H), \quad \text{ and } \quad \alpha_t(v)(g) = 0 \ (v \in K, \ g \in G \setminus H),$$
		so that $\Phi_t(v) \delta_e = \alpha_t(v) \ (v \in K)$. 
		
		We next show that each map $\Phi_t$ is a left $CV_p(N)$-module homomorphism. Indeed, given $S \in CV_p(N)$ we have
		\begin{align*}
			S\Phi(v)\delta_e =  S \sum_{t \in \mathcal{T}} \alpha(v)|_{Nt} 
			= S \sum_{t \in \mathcal{T}} \alpha_t(v)*\delta_t 
			= \sum_{t \in \mathcal{T}} (S\alpha_t(v))*\delta_t
			\quad \text{(by \eqref{eq2.1}}). 
		\end{align*}
		Also
		\begin{align*}
			S \Phi(v) \delta_e &= \Phi(S \cdot v) \delta_e 
			= \sum_{t \in \mathcal{T}} \alpha(S \cdot v)|_{Nt} 
			=  \sum_{t \in \mathcal{T}} \alpha_t(S \cdot v)*\delta_t.
		\end{align*}
		By comparing these two expressions, we see that we must have $S\alpha_t(v) = \alpha_t(S \cdot v)$, for each $v \in K$ and $t \in \mathcal{T}$, and hence also $S\Phi_t(v) = \Phi_t(S \cdot v)$, since $\delta_e$ is a separating vector. 
		
		Now, fix $S \in CV_p(N)$, and let $\beta(n) = \langle \delta_n, S\delta_e \rangle \ (n \in N)$, so that 
		$$S \xi = \beta*\xi \quad (\xi \in \ell^p(N)).$$
		We claim that 
		\begin{equation}		\label{eq3.1}
			\sum_{t \in \mathcal{T}} \beta^t*\alpha_t(v) *\delta_t = \alpha(v)*\beta \quad (v \in K).
		\end{equation}
		Note that the left hand side converges because the terms $\beta^t*\alpha_t(v) *\delta_t$ are disjointly supported, and 
		$$ \sum_{t \in \mathcal{T}} \| \beta^t*\alpha_t(v) *\delta_t \|_p^p
		= \sum_{t \in \mathcal{T}} \| S^t(\alpha_t(v) *\delta_t )\|_p^p
		\leq \| S \|^p \sum_{t \in \mathcal{T}} \| \alpha_t(v) \|_p^p = \|S\|^p \|\alpha(v) \|_p^p < \infty.$$
		We suppress $v$ for the time being.
		Let $n \in N, \ s \in \mathcal{T}$, and consider $g = ns \in Ns$. We have
		\begin{align*}
			\left( \sum_{t \in \mathcal{T}} \beta^t*\alpha_t *\delta_t \right)(g) &= \left( \beta^s*\alpha_s *\delta_s \right)(ns) 
			= \sum_{u \in N} \beta^s(u)(\alpha_s*\delta_s)(u^{-1}ns) \\
			&=  \sum_{u \in N} \beta^s(u)\alpha(u^{-1}ns).
		\end{align*}
		 We also have
		\begin{align*}
			(\alpha*\beta)(g) &= \sum_{u \in N} \alpha(nsu^{-1})\beta(u) 
			= \sum_{u \in N} \beta(s^{-1}us)\alpha(ns(s^{-1}us)^{-1}) \\ 
			&= \sum_{u \in N} \beta(s^{-1}us)\alpha(nss^{-1}u^{-1}s) = \sum_{u \in N} \beta(s^{-1}us)\alpha(nu^{-1}s) \\
			&= \sum_{u \in N} \beta^s(u)\alpha(u^{-1}ns), 
		\end{align*}
		which proves the claim.
		
		Now suppose that $S \in J$. We shall show that $\Phi \cdot S = 0$. Let $v \in K$. Then
		\begin{align*}
			(\Phi \cdot S)(v)\delta_e &= \Phi(v)S\delta_e = \alpha(v)*\beta  
			= \sum_{t \in \mathcal{T}}\beta^t*\alpha_t(v)*\delta_t \quad (\text{by \eqref{eq3.1}}) \\
			&= \sum_{t \in \mathcal{T}} S^t\Phi_t(v) \delta_t \quad (\text{by \eqref{eq2.1}}) \\
			&= \sum_{t \in \mathcal{T}} \Phi_t(S^t \cdot v)\lambda(t) \delta_e .  
		\end{align*}
		Since $J$ is $G$-conjugation-invariant, $S^t \in J$ for each $t \in \mathcal{T}$, and so each $S^t \cdot v = 0$, and the above sum is $0$. Since $\delta_e$ is a separating vector, this forces $\Phi \cdot S = 0$, as required.
	\end{proof}
	
	
	
	\begin{lemma}		\label{3.7}
		Let $G$ be a group, and let $N$ be a normal $FC$-central subgroup of $G$, isomorphic to $\Z^n$, for some $n \in \N$. Then $CV_p(G)$ has a proper, weak*-closed ideal $I$ such that $I \cap \lambda_p(\ell^1(G)) \neq \{ 0 \}$.
	\end{lemma}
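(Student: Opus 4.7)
The plan is to apply Lemma \ref{3.6} with $J$ and $K$ constructed from a $G$-invariant indicator multiplier on $\widehat{N}$. Since $N \cong \Z^n$ is finitely generated and $N \subseteq FC(G)$, each generator of $N$ has finite $G$-conjugacy class, so the conjugation action of $G$ on $N$ factors through a finite quotient $Q$; dually, $Q$ acts (faithfully) on $\widehat{N} \cong \T^n$ by group automorphisms.

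First I would fix a point $x_0 \in \T^n$ whose $Q$-stabiliser is trivial---a generic such point exists because the fixed set of each non-identity $q \in Q$ is a closed subvariety of $\T^n$ of dimension strictly less than $n$, hence of measure zero---and a product of arcs $B \subset \T^n$ centred at $x_0$, chosen small enough that the translates $\{qB : q \in Q\}$ are pairwise disjoint with total measure strictly less than $1$. Set $E = \bigsqcup_{q \in Q} qB$, a closed $G$-invariant subset of $\widehat{N}$ with $0 < |E| < 1$. For $1 < p < \infty$, the indicator $\1_E$ defines a bounded $p$-Fourier multiplier on $\T^n$: by M.\ Riesz's theorem, indicators of arcs in $\T$ are $p$-multipliers, tensor products give $\1_B$ as a multiplier, the $GL_n(\Z)$-action on $\T^n$ preserves the multiplier class (so each $\1_{qB}$ is a multiplier), and finite sums of multipliers are multipliers. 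Let $P \in CV_p(N)$ denote the corresponding idempotent, and set $J := (1-P)CV_p(N)$. Then $J$ is a weak*-closed two-sided ideal (the range of the weak*-continuous projection $T \mapsto (1-P)T$, using that $CV_p(N)$ is commutative), and is $G$-invariant since $P$ is. For any $\alpha \in \ell^1(N)$ with $\widehat{\alpha}|_E = 0$ one has $P\lambda_p(\alpha) = 0$, hence $\lambda_p(\alpha) \in J$; regularity of the Wiener algebra provides such a non-zero $\alpha$, so $J \cap \lambda_p(\ell^1(N)) \neq \{0\}$.

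For the module, I would take $K := P \cdot CV_p(G)$, which is a weak*-closed left $CV_p(N)$-submodule of $CV_p(G)$, non-zero because $P \cdot \id_{\ell^p(G)} = P \neq 0$. Since $(1-P)P = 0$, the ideal $J$ annihilates $K$, and the inclusion $\iota \colon K \hookrightarrow CV_p(G)$ is a non-zero element of $\operatorname{Hom}^l_{CV_p(N)}(K, CV_p(G))$. Lemma \ref{3.6} then produces a weak*-closed ideal $I := \ann(\operatorname{Hom}^l_{CV_p(N)}(K, CV_p(G)))$ of $CV_p(G)$ with $J \subseteq I$; the existence of $\iota \neq 0$ forces $I$ to be proper, while $J \cap \lambda_p(\ell^1(N)) \neq \{0\}$ yields $I \cap \lambda_p(\ell^1(G)) \neq \{0\}$. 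The main obstacle---and the point where the hypothesis $p \neq 1$ genuinely enters---is the verification that $\1_E \in CV_p(N)$, which rests on the classical harmonic-analytic input from M.\ Riesz on the boundedness of the conjugate function.
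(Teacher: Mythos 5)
Your argument is correct, and it reaches Lemma \ref{3.6} through the same overall template as the paper --- produce a $G$-invariant compact set $E \subset \widehat{N} \cong \T^n$ with Haar measure strictly between $0$ and $1$, manufacture from it a $G$-invariant ideal $J$ of $CV_p(N)$ and a left $CV_p(N)$-module $K$ annihilated by $J$, and take $I = \ann\operatorname{Hom}^l_{CV_p(N)}(K, CV_p(G))$ --- but it diverges at the key technical point. The paper never constructs an idempotent: it sets $J = \{T \in CV_p(N) : \widehat{T}|_E = 0\}$ and $K = \{T \in CV_p(N) : \widehat{T}|_{\widehat{N}\setminus E} = 0\}$, observes that these are ideals with $J = \ann(K)$ and that $J$ is $G$-invariant because $E$ is, and uses only the regularity of $A(\widehat{N})$ to see that both are non-zero and that $J$ meets $\lambda_p(\ell^1(N))$. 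You instead realise $\1_E$ as a bounded idempotent $P \in CV_p(N)$ and take $J = (1-P)CV_p(N)$, $K = P\,CV_p(G)$; this gives a cleaner algebraic picture (a complemented, weak*-closed ideal and an explicit non-zero element $P$ of $K$), but at the price of a genuine harmonic-analytic input --- the M.~Riesz theorem for $\ell^p(\Z^n)$, its stability under tensoring and under the $GL_n(\Z)$-action --- which the paper's softer hull-kernel formulation avoids entirely. Your construction of $E$ also differs cosmetically: the paper intersects the finitely many $G$-translates of a neighbourhood of the identity and takes the closure, whereas you take a union of translates of a small box about a point with trivial stabiliser; note that the free-point and disjointness requirements are not actually needed for $E$ to be invariant with $0 < m(E) < 1$ (any sufficiently small box works, since the action factors through a finite quotient), though disjointness does streamline your multiplier estimate for $\1_E$. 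Both routes conclude properness identically, via the non-vanishing of the inclusion homomorphism.
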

	
	\begin{proof}
		We identify $\widehat{N}$ with $\T^n$. Let $U_0 = \{ \boldsymbol{z} \in \T^n : |z_i - 1|<1, \ i = 1, \ldots, n \},$ which is an open subset of $\T^n$ containing the identity. 
		Let $U = \bigcap_{t \in G} U_0^t $. Since $N$ is finitely generated and $FC$-central, there are only finitely many distinct automorphisms of $N$ 
		induced by conjugation by elements of $G$, and as such the intersection defining $U$ is actually a finite intersection. 
		It follows that $U$ is an open set, and it is non-empty as $e \in U$. Now, let $E = \overline{U}$. 
		Then $E$ is a compact neighbourhood of the identity in $\T^n$ with $0<m(E)<1$, which is invariant for the action of $G$ induced by conjugation.
		
		Define
		$$J = \{ T \in CV_p(N) : \widehat{T}|_E = 0\}, \qquad K = \{ T \in CV_p(N) : \widehat{T}|_{\widehat{N} \setminus E} = 0\}.$$
		Then $J$ and $K$ are ideals in $CV_p(N)$, with $J = \ann(K)$. 
		Because $E$ is $G$-conjugation-invariant,  so is $J$. 
		Since the Fourier algebra is regular, there exists $u  \in A(\widehat{N}) \setminus \{ 0 \}$ such that $u|_E = 0$, and taking the inverse Fourier transform of $u$ gives a non-zero element $f$ of $\ell^1(G)$ such that $\lambda_p(f)$ belongs to $J$. Similarly $K \neq \{ 0\}.$
				
		Let 
		$$I = \ann \operatorname{Hom}^l_{CV_p(N)}(K, CV_p(G)).$$
		By Lemma \ref{3.6} $I$ is weak*-closed, and contains $J$, so that $I \cap \lambda_p(\ell^1(G)) \neq \{ 0 \}$.		
		Finally, we must show that $I$ is proper. Note that composing the inclusion maps $K \to CV_p(N) \to CV_p(G)$ gives 
		a non-zero left $CV_p(N)$-module homomorphism $K \to CV_p(G)$, so that $\operatorname{Hom}^l_{CV_p(N)}(K, CV_p(G))$ is non-zero.
		As such $\id_{\ell^p(G)}$ is an element of $CV_p(G)$ not belonging to $I$. This completes the proof.
	\end{proof}
	
	\begin{theorem}		\label{3.8}
		Let $G$ be a group, let $1<p<\infty$, and let $\mathcal{A}$ be a weak*-closed subalgebra of $\B(\ell^p(G))$ with 
		$$PM_p(G) \subset \mathcal{A} \subset CV_p(G).$$
		Then $\mathcal{A}$ is weak*-simple if and only if $G$ is ICC.
	\end{theorem}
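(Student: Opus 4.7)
The reverse direction is already in hand: if $G$ is ICC, then Proposition \ref{3.1}(ii) applies to any weak*-closed subalgebra between $PM_p(G)$ and $CV_p(G)$ and yields weak*-simplicity. The plan for the converse is the contrapositive: assuming $FC(G) \neq \{e\}$, I want to construct a non-zero proper weak*-closed ideal of $\mathcal{A}$.

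The overall strategy is first to produce a non-zero proper weak*-closed ideal $I$ of $CV_p(G)$ with $I \cap \lambda_p(\ell^1(G)) \neq \{0\}$, and then to pass to $I \cap \mathcal{A}$. The intersection is a weak*-closed two-sided ideal of $\mathcal{A}$ because $I$ is a two-sided ideal of $CV_p(G)$ and $\mathcal{A}$ is a weak*-closed subalgebra; it is non-zero because $\lambda_p(\ell^1(G)) \subset PM_p(G) \subset \mathcal{A}$; and it is proper because $\id_{\ell^p(G)} = \lambda_p(\delta_e) \in \mathcal{A}$ but $\id_{\ell^p(G)} \notin I$. Lemma \ref{3.7} does the heavy lifting of producing $I$, but only once we have located a normal $FC$-central subgroup of $G$ isomorphic to $\Z^n$. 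The heart of the argument is therefore to find such a subgroup whenever $FC(G) \neq \{e\}$, and to handle the residual torsion case when this is impossible.

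To locate a candidate, fix $g \in FC(G) \setminus \{e\}$ and let $N_0$ be the normal closure of $g$ in $G$. Since the conjugacy class of $g$ is finite, $N_0$ is finitely generated, and it is contained in $FC(G)$, so $N_0$ is a finitely generated $FC$-group. By B.\,H.\ Neumann's theorem, $[N_0 : Z(N_0)]$ is finite, so $Z(N_0)$ is finitely generated abelian and, being characteristic in $N_0$, is normal in $G$. If $g$ has infinite order then $\langle g \rangle \cap Z(N_0)$ has finite index in $\langle g \rangle \cong \Z$, so $Z(N_0)$ has positive torsion-free rank; writing $Z(N_0) \cong \Z^n \oplus F$ with $F$ finite, the characteristic subgroup $|F| \cdot Z(N_0) \cong \Z^n$ is normal in $G$ and $FC$-central, and Lemma \ref{3.7} applies directly.

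If instead every non-trivial element of $FC(G)$ is torsion, then by Dicman's lemma $N_0$ is finite, since it is generated by a finite normal set of torsion elements. Then $G$ has a non-trivial finite normal subgroup $N_0$, and I would handle this by adapting the proof of Lemma \ref{3.7}: the dual $\widehat{N_0}$ is finite and discrete, the singleton $E = \{1\}$ containing the trivial character is a proper non-empty $G$-invariant subset of $\widehat{N_0}$, and the ideals
\[
J = \{ T \in \ell^1(N_0) : \widehat{T}(1) = 0 \} \quad \text{and} \quad K = \C e_1, \text{ where } e_1 = |N_0|^{-1} \textstyle\sum_{n \in N_0} \delta_n,
\]
are non-zero, with $J$ being $G$-conjugation-invariant. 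Since Lemma \ref{3.6} is stated for an arbitrary normal abelian subgroup $N$, it produces the required $I$ just as in Lemma \ref{3.7}. The main potential obstacle is confirming that the torsion/non-torsion dichotomy is exhaustive and that the torsion case really does feed cleanly into Lemma \ref{3.6}; both are routine once Dicman's lemma is in play.
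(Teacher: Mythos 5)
Your ``if'' direction and your reduction of the ``only if'' direction to producing a weak*-closed ideal $I \lhd CV_p(G)$ with $I \cap \lambda_p(\ell^1(G)) \neq \{0\}$ and $\id_{\ell^p(G)} \notin I$ both match the paper. Your handling of the case where $FC(G)$ contains an element of infinite order is correct and is a genuine variant: you extract a normal $FC$-central copy of $\Z^n$ from the centre of the normal closure of $g$ via B.~H.~Neumann's centre-by-finite theorem for finitely generated $FC$-groups, whereas the paper only runs the $\Z^n$ argument when $FC(G)_T = \{e\}$, in which case $FC(G)$ is abelian and torsion-free and $\langle ccl_G(x) \rangle \cong \Z^n$ immediately. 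Either route feeds correctly into Lemma \ref{3.7}.

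The gap is in your all-torsion case. The Dicman subgroup $N_0 = \langle ccl_G(g) \rangle$ is finite and normal but need not be abelian, and Lemma \ref{3.6} is not merely ``stated for'' abelian $N$: its proof genuinely uses commutativity of $N$ in the verification of \eqref{eq3.1}, where the two computations produce $\sum_u \beta(s^{-1}us)\alpha(u^{-1}ns)$ and $\sum_u \beta(s^{-1}us)\alpha(nu^{-1}s)$ and these are identified using $nu^{-1} = u^{-1}n$. Nor can you always retreat to an abelian finite normal subgroup: if $G = A_5 \times F_2$ with $F_2$ the free group of rank $2$, then $FC(G) = A_5$ is all-torsion and the only non-trivial finite normal subgroup of $G$ is $A_5$ itself, which is non-abelian and has no non-trivial proper subgroups normal in $G$. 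So ``feeds cleanly into Lemma \ref{3.6}'' is exactly where the argument breaks. The paper avoids the Hom-module machinery entirely here: for a non-trivial finite normal subgroup $N \neq G$, the element $P = \lambda_p(\frac{1}{|N|}\1_N)$ lies in $\mathcal{A}$, is a central idempotent (central by Lemma \ref{0.3}, since $\1_N$ is constant on conjugacy classes), and $\mathcal{A}P$ is a non-zero, proper, weak*-closed ideal. Replacing your character-theoretic construction with this idempotent argument closes the gap; the alternative of reproving Lemma \ref{3.6} for non-abelian finite $N$ is not routine.
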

	
	\begin{proof}
		If $G$ is ICC then $\mathcal{A}$ is weak*-simple by Proposition \ref{3.1}(ii). We shall prove the converse.
		
		Suppose that $G$ is not ICC. Then $FC(G) \neq \{e\}$ and we shall use the structure of $FC(G)$ to build a weak*-closed ideal of $\mathcal{A}$.
		 There are two possibilities concerning $FC(G)_T$.
		
		\vskip 2mm
		\underline{Case 1: $FC(G)_T \neq \{e \}.$} In this case, $G$ contains a finite normal subgroup $N$ by Dicman's Lemma  \cite[Lemma 1.3]{T}. We suppose that $G \neq N$, since otherwise the result is trivial. Let $P = \lambda_p(\frac{1}{|N|}\1_N)$. Then $P$ is a central idempotent in $\mathcal{A}$ and $\mathcal{A}P$ is a weak*-closed ideal. Note that $\mathcal{A}P \neq \{0\}$.  Moreover, $\mathcal{A}P \neq \mathcal{A}$ because 
		$\id_{\ell^p(G)} \notin \mathcal{A}P$: indeed, $\id_{\ell^p(G)} - P$ annihilates $\mathcal{A}P$ but not $\id_{\ell^p(G)}$.
				
		\vskip 2mm
		\underline{Case 2: $FC(G)_T = \{e \}.$} By \cite[Theorem 1.6]{T} $FC(G)_T \supset FC(G)'$, so that $FC(G)'$ is also trivial, and $FC(G)$ is abelian. 
		Pick $x \in FC(G)$, and let $N = \langle ccl_G(x) \rangle$, which is a normal subgroup of $G$. Since $ccl_G(x)$ is finite, $N$ is a finitely-generated, torsion-free, abelian group, and hence $N \cong \Z^n$, for some $n \in \N$.
		 As such we may apply Lemma \ref{3.7} to get a weak*-closed ideal $I$ of $CV_p(G)$ with $I \cap \lambda_p(\ell^1(G)) \neq \{ 0 \}$, and hence also $I \cap \mathcal{A} \neq \{ 0 \}$. Moreover, $\id_{\ell^p(G)} \notin I$ implies that $I \cap \mathcal{A}$ is a proper ideal of $\mathcal{A}$, and clearly it is weak*-closed. Hence $\mathcal{A}$ is not weak*-simple.
	\end{proof}

	\section{A Uniform ICC Condition}
	
	In this short section we shall prove a result (Proposition \ref{icc3}) about conjugation in infinite conjugacy classes that we shall require for the proofs of our main results in Section 5. We find it interesting to note that, as a special case, Proposition \ref{icc3} implies that the ICC condition for groups is equivalent to the following uniform version of the condition. 
	
	\begin{definition}	\label{iccdef}
		Let $G$ be an infinite group. We say that $G$ is \textit{uniformly ICC} if for every conjugacy class $Y \neq \{ e \}$, and every pair of finite sets $E, F \subset Y$ there exists $g \in G$ such that $gEg^{-1} \cap F = \emptyset$.
	\end{definition}
	
	
	\begin{lemma} 	\label{icc1}
		Let $G$ be a group, and let $g,h,x \in G$. Then we have
			 $$\{ t \in G: (x^h)^t = x^g\} = gh^{-1}C_G(x)^h.$$
	\end{lemma}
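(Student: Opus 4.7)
The plan is a direct computation: unfold the superscript notation $x^g = gxg^{-1}$ on both sides of the equation $(x^h)^t = x^g$, reduce to a condition of the form ``something lies in $C_G(x)$'', and then rewrite the resulting coset in the form given in the statement.

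First I would compute $(x^h)^t = t(hxh^{-1})t^{-1} = (th)x(th)^{-1}$, so the equation $(x^h)^t = x^g$ becomes $(th)x(th)^{-1} = gxg^{-1}$, which upon conjugating by $g^{-1}$ is equivalent to $(g^{-1}th)\,x\,(g^{-1}th)^{-1} = x$. This last equation says exactly that $g^{-1}th \in C_G(x)$, or equivalently $t \in gC_G(x)h^{-1}$. Since this chain of equivalences is reversible, $\{t \in G : (x^h)^t = x^g\} = gC_G(x)h^{-1}$.

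To match the form stated in the lemma, I would then use the elementary identity $gC_G(x)h^{-1} = gh^{-1}\cdot hC_G(x)h^{-1} = gh^{-1}C_G(x)^h$, which is immediate from the notational convention $A^h = hAh^{-1}$. There is no real obstacle; the argument is essentially one line. The only cosmetic point to flag is that the right-hand side of the displayed equation is written in the somewhat unnatural form $gh^{-1}C_G(x)^h$ rather than the more direct $gC_G(x)h^{-1}$, presumably because this is the expression most convenient for the application in Proposition \ref{icc3}, where the conjugates $x^h$ of a single element play a distinguished role and it is useful to have the ``twisted'' centraliser $C_G(x)^h$ appear explicitly.
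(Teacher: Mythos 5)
Your proof is correct and follows exactly the same route as the paper's: unfold $(x^h)^t = x^g$ to $(g^{-1}th)x(g^{-1}th)^{-1}=x$, conclude $g^{-1}th\in C_G(x)$, hence $t\in gC_G(x)h^{-1}=gh^{-1}C_G(x)^h$. Nothing is missing.
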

	
	\begin{proof}
		 We have $(x^h)^t = x^g$ if and only if $(g^{-1}th)x(h^{-1}t^{-1}g)  =x$. As such $t$ conjugates $x^h$ to $x^g$ if and only if $g^{-1}th \in C_G(x)$, which is equivalent to
		 $t \in gC_G(x)h^{-1} = gh^{-1}C_G(x)^h$, and the lemma follows.
	\end{proof}
	
\begin{samepage}
	\begin{lemma}		\label{icc2}
		Let $G$ be a group, and let $H$ be a subgroup of $G$. Suppose that, for some $m,n \in \N$, there exist $t_1, \ldots, t_n \in G$, and $g_1, \ldots, g_m \in G$ such that 
		  $$\bigcup_{i = 1}^n \bigcup_{j=1}^m t_iH^{g_j} =G.$$
		 Then $[G:H] <\infty$.
	\end{lemma}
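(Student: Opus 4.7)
The plan is to recognize the hypothesis as exhibiting $G$ as a finite union of left cosets of the subgroups $H^{g_1}, \ldots, H^{g_m}$ (each of the $nm$ sets $t_i H^{g_j}$ being a left coset of $H^{g_j}$), and then to invoke B.~H.~Neumann's classical covering lemma: if a group is covered by finitely many left cosets of subgroups $H_1,\ldots, H_N$, then at least one of the $H_k$ has finite index in $G$ (indeed index at most $N$).

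Applied to our situation, Neumann's lemma produces some index $j$ with $[G : H^{g_j}] < \infty$. Since conjugation by $g_j$ is an inner automorphism of $G$, it sends left cosets of $H$ bijectively onto left cosets of $H^{g_j}$, so $[G:H] = [G:H^{g_j}]$, which is finite. This gives the conclusion.

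The only substantive step is Neumann's lemma itself, which is an elementary but not entirely trivial induction. If one wanted to prove it in place rather than cite it, the inductive step runs as follows: assume the claim fails, so every $H^{g_j}$ has infinite index. Pick any coset $t_i H^{g_j}$ in the given cover. Since $[G : H^{g_j}]$ is infinite, there are infinitely many left cosets of $H^{g_j}$ disjoint from $t_i H^{g_j}$, and each such coset must be covered by the remaining cosets $t_{i'} H^{g_{j'}}$ with $j' \neq j$; a pigeonhole argument on the finitely many remaining subgroups then forces one of the other $H^{g_{j'}}$ to contain infinitely many pairwise disjoint translates lying in a single coset of $H^{g_{j'}}$, a contradiction. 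This refinement is standard and is the only mildly subtle point in the argument; everything else reduces to the observation that index is preserved under conjugation.
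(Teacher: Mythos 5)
Your proposal is correct: the hypothesis does exhibit $G$ as a finite union of left cosets of the conjugates $H^{g_1},\dots,H^{g_m}$, B.~H.~Neumann's covering lemma then gives some $j$ with $[G:H^{g_j}]<\infty$, and $[G:H^{g_j}]=[G:H]$ finishes the argument. The paper does not cite Neumann; it runs a self-contained induction on the number of conjugates: either the listed cosets of $H^{g_m}$ already form a transversal (so the index is finite), or some coset $sH^{g_m}$ is covered by cosets of the remaining conjugates, whence every $t_kH^{g_m}=(t_ks^{-1})sH^{g_m}$ is too and $H^{g_m}$ can be eliminated from the cover. This is precisely the standard proof of Neumann's lemma specialised to conjugates of a single subgroup, so the two arguments are the same mathematics packaged differently: yours buys brevity by citing a classical result, the paper's buys self-containment. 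One caution about your optional inline sketch: a coset of $H^{g_j}$ disjoint from the single coset $t_iH^{g_j}$ need not be covered by cosets of the \emph{other} subgroups, since it may meet other cosets $t_{i'}H^{g_j}$ of the same subgroup occurring in the cover; you need a coset of $H^{g_j}$ distinct from all of $t_1H^{g_j},\dots,t_nH^{g_j}$, and the clean way to conclude is the elimination induction above rather than the pigeonhole phrasing you give. Since you present that sketch only as an alternative to citation, this does not affect the validity of your proof.
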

	\end{samepage}

	\begin{proof}
		We shall show that $G$ can be written as the union of finitely many left cosets of the subgroups 
		$H^{g_1}, \ldots, H^{g_{m-1}}$. By repeating this argument $m-1$ times we can then infer that $G$ is the union of finitely many right cosets of $H^{g_1}$, and hence that $[G:H] = [G:H^{g_1}]$ is finite. 
		
		If $t_1, \ldots, t_n$ is a left transversal for $H^{g_m}$ in $G$ then we are done, so suppose that there is some coset $sH^{g_m}$ which is not among $t_1H^{g_m}, \ldots, t_nH^{g_m}$. Then we must have
		$$sH^{g_m} \subset \bigcup_{i = 1}^n \bigcup_{j=1}^{m-1} t_iH^{g_j},$$
		which implies that, for each $k = 1, \ldots, n$, we have
		$$t_kH^{g_m} = (t_ks^{-1})sH^{g_m} \subset \bigcup_{i = 1}^n \bigcup_{j=1}^{m-1} t_ks^{-1}t_iH^{g_j}$$
		and it follows that $G$ is equal to the union of the original cosets of $H^{g_1}, \ldots, H^{g_m}$ together with these new cosets, that is
		$$G = \bigcup_{i = 1}^n \bigcup_{j=1}^{m-1} t_iH^{g_j} \cup 
		\bigcup_{i = 1}^n \bigcup_{k = 1}^n\bigcup_{j=1}^{m-1}  t_ks^{-1}t_iH^{g_j}.$$
		This completes the proof.
	\end{proof}
	
	The following proposition proves, in particular, that being ICC is equivalent to being uniformly ICC. In fact, an even stronger statement is obtained.
	
	\begin{proposition}		\label{icc3}
		Let $G$ be a group with an infinite conjugacy class $Y$, and let $W \subset FC(G)$ be finite. Then for every pair of finite subsets $E, F \subset Y$ there exists $g \in C_G(W)$ such that $gEg^{-1} \cap F = \emptyset$.
	\end{proposition}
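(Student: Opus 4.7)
The plan is to fix a base point $x \in Y$, write $E = \{x^{h_1}, \ldots, x^{h_n}\}$ and $F = \{x^{g_1}, \ldots, x^{g_m}\}$, and translate the desired conclusion into a coset-covering statement. By Lemma \ref{icc1}, for each pair $(i,j)$ the set of $t \in G$ satisfying $(x^{h_i})^t = x^{g_j}$ is exactly $g_j h_i^{-1} C_G(x)^{h_i}$. Hence $gEg^{-1} \cap F \neq \emptyset$ if and only if
$$g \in \bigcup_{i=1}^n \bigcup_{j=1}^m g_j h_i^{-1} C_G(x)^{h_i},$$
so the proposition amounts to showing that this finite union of cosets of conjugates of $C_G(x)$ does not contain all of $C_G(W)$.

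Suppose, for a contradiction, that $C_G(W)$ is contained in the above union. The key additional input is that $[G : C_G(W)] < \infty$: since $W \subset FC(G)$ is finite, $C_G(W) = \bigcap_{w \in W} C_G(w)$ is a finite intersection of finite-index subgroups. Fixing a finite left transversal $s_1, \ldots, s_r$ for $C_G(W)$ in $G$, we multiply the containment on the left by the $s_l$ to obtain
$$G = \bigcup_{l=1}^r s_l C_G(W) \subset \bigcup_{l=1}^r \bigcup_{i=1}^n \bigcup_{j=1}^m s_l g_j h_i^{-1} C_G(x)^{h_i},$$
which exhibits $G$ itself as a finite union of left cosets of the conjugate subgroups $C_G(x)^{h_1}, \ldots, C_G(x)^{h_n}$. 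Lemma \ref{icc2} then applies with $H = C_G(x)$ and conjugators $h_1, \ldots, h_n$, yielding $[G : C_G(x)] < \infty$, which contradicts $|Y| = |ccl_G(x)| = \infty$.

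The main obstacle, to the extent there is one, is spotting the right framing. Lemma \ref{icc1} packages the conjugation condition as a coset-avoidance problem, the hypothesis $W \subset FC(G)$ is used only to upgrade a covering of $C_G(W)$ to a covering of $G$, and Lemma \ref{icc2} then closes the argument via the infiniteness of $Y$. One minor bookkeeping point is that Lemma \ref{icc2} as stated requires a single list of left coset representatives that works for every conjugate $H^{h_i}$; this is easily arranged by taking the list $\{s_l g_j h_i^{-1} : l,j,i\}$ and pairing it with each $h_i$, allowing harmlessly superfluous cosets in the resulting union.
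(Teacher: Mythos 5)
Your proof is correct and follows essentially the same route as the paper's: translate the conjugation condition via Lemma \ref{icc1} into a statement that the ``bad'' set is a finite union of left cosets of conjugates of $C_G(x)$, use the finiteness of $W \subset FC(G)$ to upgrade a hypothetical covering of $C_G(W)$ to a covering of $G$, and invoke Lemma \ref{icc2} to contradict $|Y| = \infty$. Your remark about padding the coset list so that Lemma \ref{icc2} applies in its stated product form is a valid and careful touch that the paper leaves implicit.
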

	
	\begin{proof}
		Define
		$$D = \{ g \in G : gEg^{-1} \cap F \neq \emptyset \}.$$
		We want to find $g \in C_G(W) \setminus D$.
		If we enumerate the elements of $E$ and $F$ as $E = \{x_1, x_2, \ldots, x_n \}$ and $F = \{ y_1, y_2, \ldots, y_m \}$ then we see that
		\begin{align*}
			D =  
			 \bigcup_{j=1}^m \bigcup_{i = 1}^n \{ g \in G: gx_ig^{-1} = y_j\}.
		\end{align*}
		By Lemma \ref{icc1} $D$ is a finite union of left cosets of conjugates of $C_G(x_1)$.
		
		Assume towards a contradiction that $D \supset C_G(W)$. Since $W \subset FC(G)$ we must have $[G:C_G(w)] < \infty$ for each $w \in W$, and hence $C_G(W) = \bigcap_{w \in W} C_G(w)$ has finite index in $G$. 
		Letting $t_1, \ldots, t_k$ be a left transversal for $C_G(W)$ in $G$, we must have $t_1D \cup  \cdots \cup t_k D = G$. Since $D$ is a finite union of left cosets of conjugates of $C_G(x_1)$, we can now infer that $G$ itself must be a finite union of
		left cosets of conjugates of $C_G(x_1)$. 
		By Lemma \ref{icc2} this implies that $[G : C_G(x_1)] $ is finite, and hence that
		$$|Y| = |G/C_G(x_1)| < \infty,$$
		a contradiction. Therefore $D$ does not contain $C_G(W)$, and taking some $g \in C_G(W) \setminus D$ we have $gEg^{-1} \cap F = \emptyset$, as required.
	\end{proof}

	\section{Weak*-Closed ideals of $\ell^1(G)$}
	
	We begin this section by looking at the $\ell^1$-algebras of infinite abelian groups. The next lemma gives a method to construct many weak*-closed ideals in such algebras.
		
	\begin{lemma}	\label{yc}
		Let $G$ be an infinite abelian group. Take an open set $U \subset \widehat{G}$, with 
		$0< m(\overline{U}) <1$, where $m$ denotes the Haar measure on $\widehat{G}$, and set $E = \overline{U}$. Then
		$$K_E := \{ f \in \ell^1(G) : \widehat{f}(t) = 0 \ (t \in E)\}$$
		is a proper, non-zero, weak*-closed ideal of $\ell^1(G)$.
	\end{lemma}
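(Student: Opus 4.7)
The plan is to verify the four claims (ideal, non-zero, proper, weak*-closed) in roughly that order, exploiting that for the infinite discrete abelian group $G$ the compact group $\widehat{G}$ has a well-behaved Fourier theory: the Fourier transform $\mathcal{F}\colon\ell^1(G)\to A(\widehat{G})\subset C(\widehat{G})$ is an injective algebra homomorphism intertwining convolution with pointwise multiplication.

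The ideal and properness properties are immediate. Since multiplication on the Fourier side is pointwise, $K_E=\mathcal{F}^{-1}(\{u\in A(\widehat{G}):u|_E=0\})$ is an ideal. Also $\widehat{\delta_e}\equiv 1$, so $\delta_e\notin K_E$ (noting that $E\neq\emptyset$ because $m(E)>0$), hence $K_E$ is proper. For non-triviality, the complement $\widehat{G}\setminus E$ is non-empty and open because $m(E)<1$; picking $x_0\in\widehat{G}\setminus E$, regularity of the Fourier algebra $A(\widehat{G})$ produces some $u\in A(\widehat{G})$ with $u(x_0)\neq 0$ and $u|_E=0$, and $\mathcal{F}^{-1}(u)$ is then a non-zero element of $K_E$.

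The main obstacle, and the whole point of the lemma, is weak*-closedness. The naive approach of writing $K_E$ as the intersection of the kernels of the evaluations $f\mapsto\widehat{f}(t)$ for $t\in E$ fails, because each such evaluation corresponds to pairing $f$ with the character $t\in\ell^{\infty}(G)\setminus c_0(G)$, so these functionals are not weak*-continuous. The strategy is instead to replace point evaluations by integrals: for $\phi\in L^1(\widehat{G},m)$ supported on $E$, define
$$\check{\phi}(g) := \int_{\widehat{G}} \phi(t)\, t(g) \dd m(t) \qquad (g\in G).$$
By the Riemann--Lebesgue lemma for compact abelian groups, $\check{\phi}\in c_0(G)$. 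A Fubini computation gives
$$\langle f,\check{\phi}\rangle = \sum_{g\in G} f(g)\check{\phi}(g) = \int_E \phi(t)\widehat{f}(t)\dd m(t) \qquad (f\in\ell^1(G)).$$
If $f\in K_E$ then $\widehat{f}|_E=0$, so this pairing vanishes, giving $K_E\subset X^{\perp}$ where $X:=\{\check{\phi}:\phi\in L^1(E,m)\}\subset c_0(G)$.

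For the reverse inclusion, suppose $f\in X^{\perp}$, so $\int_E\phi(t)\widehat{f}(t)\dd m(t)=0$ for every $\phi\in L^1(E,m)$. Then $\widehat{f}=0$ almost everywhere on $E$; since $\widehat{f}$ is continuous and $U=\text{int}(E)$ is a non-empty open set with $m(U)>0$ (Haar measure of any non-empty open set in $\widehat{G}$ is positive), $\widehat{f}$ vanishes on $U$, and then on $\overline{U}=E$ by continuity. Hence $f\in K_E$ and $K_E=X^{\perp}$, which is weak*-closed as the annihilator of a subspace of the predual $c_0(G)$.
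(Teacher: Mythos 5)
Your proof is correct, and it is in essence the paper's own argument unpacked into an explicit annihilator computation. The paper obtains weak*-closedness by pulling back the weak*-closed ideal $\{ u \in L^\infty(\widehat{G}) : u|_E = 0 \text{ a.e.}\}$ along the weak*-continuous inclusion $A(\widehat{G}) \hookrightarrow L^\infty(\widehat{G})$; the weak*-continuity of that inclusion is precisely the existence of the pre-adjoint $L^1(\widehat{G}) \to c_0(G)$, $\phi \mapsto \check{\phi}$, which is your Fubini plus Riemann--Lebesgue computation, so the two proofs identify $K_E$ with the annihilator of the same subspace of $c_0(G)$. What your version buys is that it makes explicit the one genuinely delicate step, which the paper leaves implicit: the $L^\infty$-ideal only sees vanishing almost everywhere on $E$, and passing to vanishing everywhere on $E$ requires exactly your observation that $\widehat{f}$ is continuous, that non-empty open subsets of $\widehat{G}$ have positive Haar measure (forcing $\widehat{f}|_U = 0$), and that $E = \overline{U}$ --- this is the only place the hypothesis that $E$ is the closure of an open set enters the weak*-closedness argument. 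One trivial quibble: $U$ need not equal $\operatorname{int}(E)$ (only $U \subset \operatorname{int}(E)$ is guaranteed), but your argument uses only that $U$ is a non-empty open subset of $E$ with $\overline{U} = E$, so nothing is affected.
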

	
	\begin{proof}
		Note that $I := \{ f \in L^\infty(\widehat{G}) : f(t) = 0, \ \text{a.e } t \in E \}$ is a weak*-closed ideal of $L^\infty(\widehat{G})$, 
		and that the inclusion map $i \colon A(\widehat{G}) \hookrightarrow L^\infty(\widehat{G})$ is weak*-continuous. 
		As such 
		$$i^{-1}(I) = \{ f \in A(\widehat{G}) : f(t) = 0 \ (t \in E) \}$$
		 is weak*-closed in $A(\widehat{G})$, 
		and it is proper and non-zero since $A(\widehat{G})$ is regular.
		Pulling this ideal back to $\ell^1(G)$ via the inverse Fourier transform gives the ideal $K_E$.
	\end{proof}
	
	\begin{remark}		\label{abrem}
	We observe that it follows from {\"U}lger's work \cite{U14} that, for an infinite abelian group $G$, the weak*-closed ideal structure of $\ell^1(G)$ is very complicated, and in particular not all of the weak*-closed ideals of $\ell^1(G)$ are of the form $K_E$. For this discussion we identify $\ell^1(G)$ with $A(\widehat{G})$. In {\"U}lger's notation we have
	$$J(E) : = \overline{\{ u \in A(\widehat{G}) : \supp u \text{ is compact and disjoint from } E\}},$$
	$$k(E) := \{u \in A(\widehat{G}) : u(t) = 0 \ (t \in E)\},$$
	and $k(E)$ is the image of $K_E$ under the Fourier transform.
	{\"U}lger shows in \cite[Example 2.5]{U14} that $\widehat{G}$ contains a closed subset $E$ such that $\operatorname{hull}(\overline{J(E)}^{w^*})= E$, but such that $k(E)$ is not weak*-closed. Hence $\overline{J(E)}^{w^*}$ is a weak*-closed ideal not of the form $k(E)$.
	
	This, together with other results in {\"U}lger's paper such as \cite[Theorem 3.1]{U14}, lead us to suspect that classifying the weak*-closed ideals of $\ell^1(\Z)$ is approximately as hard as classifying the sets of spectral synthesis for $\T$, which is a very old and difficult unsolved problem. 
	\end{remark}
	
	We now move on to looking at general groups. 
	The next lemma allows us to lift certain weak*-closed ideals from a normal subgroup up to the ambient group. 
	The notation $\oplus$ indicates an internal direct sum, and we have identified the subset $\delta_t*J$ of $\ell^1(tN)$ with its canonical image inside $\ell^1(G)$.
	
	\begin{lemma}		\label{0.1}
		Let $G$ be a group, let $N$ be a normal subgroup of $G$, and let $\mathcal{T}$ be a transversal for $N$ in $G$. Let $J$ be a non-trivial weak*-closed ideal of 
		$\ell^1(N)$ which satisfies
		$J^s = J \ (s \in G).$
		Then
		$$I : =  \bigoplus_{t \in \mathcal{T}} \delta_t * J$$
		is a non-trivial weak*-closed ideal of $\ell^1(G)$.
	\end{lemma}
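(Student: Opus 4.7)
The plan is to verify four properties of $I$ in turn: non-zero, proper, weak*-closed, and two-sided ideal. The key reformulation I will use is that each $f \in \ell^1(G)$ decomposes uniquely and with unconditional norm convergence as $f = \sum_{t \in \mathcal{T}} P_{tN}(f)$, where $P_{tN} \colon \ell^1(G) \to \ell^1(G)$ denotes multiplication by $\1_{tN}$; under this decomposition, $f \in I$ if and only if $P_{tN}(f) \in \delta_t * J$ for every $t \in \mathcal{T}$.

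For weak*-closedness I will observe that each $P_{tN}$ is the adjoint of the obvious norm-one pointwise multiplication operator on $c_0(G)$, hence weak*-continuous. Next, $\ell^1(N)$ is a weak*-closed subspace of $\ell^1(G)$ on which the subspace weak*-topology agrees with the intrinsic one, so $J$ is weak*-closed inside $\ell^1(G)$; and since convolution with $\delta_t$ is a weak*-homeomorphism of $\ell^1(G)$, the same is true of $\delta_t * J$. Consequently $I = \bigcap_{t \in \mathcal{T}} P_{tN}^{-1}(\delta_t * J)$ is exhibited as an intersection of weak*-closed sets.

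For the ideal property, since multiplication in $\ell^1(G)$ is norm-continuous and $I$ is norm-closed, it suffices to check that $\delta_s * i \in I$ and $i * \delta_s \in I$ for $s \in G$ and $i$ ranging over the spanning set $\{\delta_t * j : t \in \mathcal{T},\ j \in J\}$ of $I$. For each such pair $(s,t)$ I will choose $t', t'' \in \mathcal{T}$ and $n, m \in N$ with $st = t' n$ and $ts = t'' m$; then $\delta_s * \delta_t * j = \delta_{t'} * (\delta_n * j)$ lies in $\delta_{t'} * J$ because $J$ is an ideal of $\ell^1(N)$, and $(\delta_t * j) * \delta_s = \delta_{t''} * (\delta_m * j^{s^{-1}})$, where the crucial step is that $j^{s^{-1}} \in J^{s^{-1}} = J$ by the $G$-conjugation invariance of $J$. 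Non-triviality is then immediate: $J \subset I$ forces $I \neq \{0\}$, while any $f \in \ell^1(N) \setminus J$ satisfies $P_N(f) = f \notin J$ and hence $f \notin I$.

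The main obstacle is handling weak*-closedness cleanly: writing $I$ naively as a sum $\sum_t \delta_t * J$ does not immediately exhibit it as weak*-closed, since arbitrary sums of weak*-closed subspaces need not be closed. The reformulation via the projections $P_{tN}$ is what makes the argument go through. The remaining verifications reduce to coset bookkeeping together with the $G$-conjugation invariance of $J$, which is precisely what is needed to ensure that right multiplication by $\delta_s$ preserves $I$.
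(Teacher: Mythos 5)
Your proof is correct and follows essentially the same route as the paper's: the same coset decomposition, the same coset bookkeeping with $st = t'n$ and $ts = t''m$, the same use of the $G$-conjugation invariance of $J$ to handle right multiplication by $\delta_s$, and the same non-triviality argument. Your weak*-closedness step via the weak*-continuous projections $P_{tN}$ is just the dual formulation of the paper's identification $I = F^{\perp}$ with $F = \bigoplus_{t \in \mathcal{T}} \delta_t * J_\perp \subset c_0(G)$, so it is not a genuinely different argument.
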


	\begin{proof}
		First, we show that $I$ is weak*-closed. Let $E = J_\perp \subset c_0(N)$, and define 
		$$F =  \bigoplus_{t \in \mathcal{T}} \delta_t *E \leq c_0(G) .$$
		A routine calculation shows that $F^\perp = I$, which implies that $I$ is weak*-closed. 
		
		Next we show that $I$ is an ideal.	Let $f \in I$ and $s \in G$. As $I$ is clearly a linear subspace, it is enough to show that $\delta_s*f, f*\delta_s \in I$.
		Write 
		$f = \sum_{t \in \mathcal{T}} \delta_t*f_t,$
		for some $f_t \in J \ (t \in \mathcal{T}).$ For each $t \in \mathcal{T}$, there exist unique $t', t'' \in \mathcal{T}$, and $n(t), m(t) \in N$, such that
		$$st = t'n(t), \qquad ts =  t'' m(t).$$
		We have
		$$\delta_s *f = \sum_{t \in \mathcal{T}} \delta_{st} * f_t = \sum_{t \in \mathcal{T}} \delta_{t'}*\delta_{n(t)}*f_t.$$
		Since $J \lhd \ell^1(N)$ we must have $\delta_{n(t)} * f_t \in J$ for all $t \in \mathcal{T}$, so that $\delta_s *f$ has the required form and belongs to $I$.
		Similarly
		$$f*\delta_s = \sum_{t \in \mathcal{T}} \delta_{ts} * f_t^{s^{-1}} = \sum_{t \in \mathcal{T}} \delta_{t''}*\delta_{m(t)}*f_t^{s^{-1}}.$$
		By the hypothesis on $J$ we have $\delta_{m(t)}*f_t^{s^{-1}} \in J$ for each $t \in \mathcal{T}$, so that $f*\delta_s$ also has the 
		required form to belong to $I$. 
		
		It is clear that $I$ is non-zero if $J$ is. Moreover, if $\delta_e \in I$ then would have $\delta_e \in J$, so that $I$ is proper. This completes the proof.
	\end{proof}

	Note that, in the previous lemma, the ideal $I$ does not depend on the choice of transversal $\mathcal{T}$, since it can be characterised as the set of those $f \in \ell^1(G)$ with the property that the restriction of $f$ to each coset of $N$ belongs to $J$.
	
	\begin{lemma}  \label{w*lim}
		Let $G$ be a group, and consider the collection of finite subsets of $G$ as a directed set under inclusion. Let $f \in \ell^1(G)$ and $(x_K) \ (K \subset \subset G)$ be a net of elements of $G$ satisfying
		$$(\supp f)^{x_K} \cap K = \emptyset \quad (K \subset \subset G).$$
		Then $f^{x_K} \to 0$ in the weak*-topology.
	\end{lemma}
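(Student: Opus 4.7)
The plan is to reduce weak*-convergence to pointwise convergence (plus uniform boundedness), exploiting the fact that conjugation is an isometry on $\ell^1(G)$, so the net $(f^{x_K})$ is automatically bounded.

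First I would observe that $\|f^{x_K}\|_1 = \|f\|_1$ for every $K$, since left and right translations (and hence conjugations) are isometric on $\ell^1(G)$. Next I would identify the support of $f^{x_K}$: from the definition $f^{x_K}(g) = f(x_K^{-1} g x_K)$, we have
$$\supp(f^{x_K}) = x_K(\supp f)x_K^{-1} = (\supp f)^{x_K},$$
so the hypothesis translates directly into $\supp(f^{x_K}) \cap K = \emptyset$ for every $K \subset\subset G$.

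Now fix an arbitrary $\varphi \in c_0(G)$ and $\varepsilon > 0$. Since $\varphi$ vanishes at infinity, I can choose a finite set $F \subset G$ such that $|\varphi(g)| < \varepsilon/(\|f\|_1 + 1)$ for every $g \in G \setminus F$. For every finite $K \supset F$, the hypothesis forces $\supp(f^{x_K}) \cap F = \emptyset$, so
$$|\langle f^{x_K}, \varphi \rangle| = \Bigl|\sum_{g \in G \setminus F} f^{x_K}(g)\, \varphi(g)\Bigr| \leq \|f^{x_K}\|_1 \cdot \sup_{g \in G \setminus F} |\varphi(g)| < \varepsilon.$$
Since the finite subsets of $G$ containing $F$ are cofinal in the directed set of finite subsets, this shows $\langle f^{x_K}, \varphi \rangle \to 0$; as $\varphi$ was arbitrary, $f^{x_K} \to 0$ in the weak*-topology.

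There is no serious obstacle here: the only thing to be careful about is the bookkeeping between $(\supp f)^{x_K}$ as defined via the conjugation action and the actual support of $f^{x_K}$, which match once one unwinds the convention $\alpha^t(g) = \alpha(t^{-1}gt)$.
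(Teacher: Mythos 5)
Your proof is correct and follows essentially the same route as the paper's: both pair $f^{x_K}$ against an arbitrary $\varphi \in c_0(G)$, use that $f^{x_K}$ is supported in $G \setminus K$ with $\ell^1$-norm equal to $\|f\|_1$, and bound the pairing by $\|f\|_1 \sup\{|\varphi(s)| : s \in G \setminus K\}$, which vanishes in the limit since $\varphi$ vanishes at infinity.
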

	
	\begin{proof}
		Let $\phi \in c_0(G)$, and write $F = \supp f$. Then 
		\begin{align*}
			|\langle \phi, f^{x_K} \rangle| &= \left| \sum_{s \in F^{x_K}} f(s) \phi(s) \right| \leq \|f\| \max \{ |\phi(s)| : s \in F^{x_K}\} 
			 \leq \|f\| \max\{ |\phi(s)| : s \in G \setminus K \},
		\end{align*}
		which can be made arbitrarily small for $K$ large enough.
	\end{proof}

	The next lemma is the key to proving that if $G$ is ICC then $\ell^1(G)$ is weak*-simple, as well as our more precise structural result Theorem \ref{thm2}. In the proof, functions restricted to the empty set and empty sums should be interpreted as zero.
	
	\begin{lemma}		\label{icc4}
		Let $G$ be a group and let $I \lhd \ell^1(G)$ be weak*-closed.
		Then for all $f \in I$ we have $f|_{FC(G)} \in I$.
	\end{lemma}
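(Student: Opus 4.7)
The plan is to realise $h := f|_{FC(G)}$ as a weak*-limit of conjugates $f^{x_n} = \delta_{x_n}*f*\delta_{x_n^{-1}}$ of $f$. Since $I$ is a two-sided weak*-closed ideal and $f \in I$, each $f^{x_n}$ lies in $I$, so any weak*-limit of these conjugates lies in $I$ as well. Write $f = h + g$ with $g = f|_{G \setminus FC(G)}$, fix increasing finite exhaustions $W_n \uparrow \supp h$ (so $W_n \subset\subset FC(G)$), $E_n \uparrow \supp g$ (so $E_n \subset\subset G \setminus FC(G)$), and $K_n \uparrow G$ by finite subsets, and set $h_n := h|_{W_n}$, $g_n := g|_{E_n}$, so that $\|h - h_n\|_1, \|g - g_n\|_1 \to 0$. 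The goal is to choose $x_n \in C_G(W_n)$ with $x_n E_n x_n^{-1} \cap K_n = \emptyset$ and to verify $f^{x_n} \to h$ in the weak*-topology.

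The key technical step is the following multi-class generalisation of Proposition~\ref{icc3}: for every $W \subset\subset FC(G)$, $E \subset\subset G \setminus FC(G)$, and $K \subset\subset G$, there exists $x \in C_G(W)$ with $xEx^{-1} \cap K = \emptyset$. Following the pattern of Proposition~\ref{icc3}, the bad set $D := \{x \in G : xEx^{-1} \cap K \neq \emptyset\}$ is, by Lemma~\ref{icc1}, a finite union of left cosets of the subgroups $C_G(e)$ for $e \in E$. Intersecting with $C_G(W)$ shows that $C_G(W) \cap D$ is a finite union of cosets of the subgroups $C_G(W \cup \{e\}) = C_G(W) \cap C_G(e)$ inside $C_G(W)$. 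If $C_G(W) \subset D$, then B.\,H.\ Neumann's coset covering theorem applied inside $C_G(W)$ forces some $C_G(W \cup \{e\})$ to have finite index in $C_G(W)$; combined with $[G : C_G(W)] < \infty$ (as $W \subset FC(G)$), this yields $[G : C_G(e)] \leq [G : C_G(W \cup \{e\})] < \infty$, so $e \in FC(G)$, contradicting $E \cap FC(G) = \emptyset$. Hence $C_G(W) \not\subset D$, and any element of $C_G(W) \setminus D$ serves.

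Granted the $x_n$, the assembly is straightforward. Since $x_n \in C_G(W_n)$ fixes every $w \in W_n$ under conjugation, the identity $h_n^{x_n} = h_n$ holds: on $W_n$ this is immediate, and for $s \notin W_n$ we would have $x_n^{-1} s x_n \in W_n$ only if $s = x_n(x_n^{-1} s x_n)x_n^{-1}$ itself lay in $W_n$ (as $x_n$ fixes $W_n$ pointwise). The remainders $(h - h_n)^{x_n}$ and $(g - g_n)^{x_n}$ have norms $\|h - h_n\|_1$ and $\|g - g_n\|_1$, which tend to zero. Finally $g_n^{x_n}$ has $\ell^1$-norm bounded by $\|g\|_1$ and support contained in $x_n E_n x_n^{-1} \subset G \setminus K_n$, hence tends weak* to $0$ by the estimate in the proof of Lemma~\ref{w*lim}. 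Combining with $h_n \to h$ in norm gives $f^{x_n} \to h$ weak*, and Lemma~\ref{icc4} follows. The main obstacle is precisely the multi-class Key Lemma: Lemma~\ref{icc2} only controls covers by conjugates of a single subgroup, whereas $E_n$ generally meets several distinct infinite conjugacy classes at once, so one needs B.\,H.\ Neumann's more general coset covering theorem, applied inside the finite-index subgroup $C_G(W_n)$ to compensate for the fact that the subgroups $C_G(e)$ are no longer all conjugate.
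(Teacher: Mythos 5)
Your proof is correct, but it takes a genuinely different route from the paper's. The paper decomposes $f$ over the finitely many infinite conjugacy classes $Y_1,\dots,Y_m$ meeting a finite set $A$ carrying all but $\eps$ of the mass of $f$, and then runs $m$ successive weak*-limits, each killing a single class via Proposition \ref{icc3} and each requiring a Banach--Alaoglu subnet extraction because the limits of the leftover pieces are not identified explicitly; the covering input there is only the elementary Lemma \ref{icc2}, which handles cosets of conjugates of a \emph{single} subgroup --- sufficient precisely because the centralisers of elements of one conjugacy class are all conjugate. You instead prove a stronger, multi-class separation lemma (for finite $W\subset FC(G)$, $E\subset G\setminus FC(G)$, $K\subset G$ there is $x\in C_G(W)$ with $xEx^{-1}\cap K=\emptyset$) by invoking B.\,H.\ Neumann's coset covering theorem inside the finite-index subgroup $C_G(W)$, and this lets you reach $f|_{FC(G)}$ as a single explicit weak*-limit of conjugates, with no subnet extraction and no $\eps$-bookkeeping across stages; your coset computation ($C_G(W)\cap gC_G(e)$ empty or a coset of $C_G(W\cup\{e\})$, whence $e\in FC(G)$ if some such subgroup had finite index) is sound. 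What you gain is a shorter assembly and a sharper version of Proposition \ref{icc3}; what the paper gains is self-containedness, proving from scratch only the special covering fact it needs rather than citing Neumann's theorem. One small repair: for uncountable $G$ you cannot exhaust $G$ by a \emph{sequence} $K_n$ of finite sets, so the conjugates should be indexed by the net of finite subsets of $G$ (exactly as in Lemma \ref{w*lim}), or by pairs $(n,K)$ with $K\subset\subset G$; this is cosmetic and does not affect the argument.
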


	\begin{proof}
		Let $\eps>0$ and let $A \subset \subset G$ satisfy $\|f|_{G\setminus A} \|< \eps/2$. We wish to find an elements of $I$ that is within $2\eps$ of $f|_{FC(G)}$, and then taking the limit as $\eps \to 0$ will tell us that $f|_{FC(G)} \in I$, as required. If $A \subset FC(G)$ then $f$ itself can be this approximation. Otherwise we proceed as follows.
		
		There are finitely many infinite conjugacy classes represented in $A$ and we shall call them $Y_1, Y_2, \ldots, Y_m$, where $m \in \N$.
		We also define $W = A \cap FC(G)$ and $w = f|_W$.
		Our plan is to define a sequence of functions $f_1, \ldots, f_m \in I$, with the property that each $f_i \ (i = 1, \ldots, m)$ can be decomposed as
		$$f_i = w+ \phi_i + \psi_i,$$
		where 
		\begin{equation}	\label{eq1}
		\supp \phi_i \subset Y_{i+1}\cup \cdots \cup Y_m
		\end{equation}
		and 
		\begin{equation}	\label{eq2}
		\|\psi_i\| \leq \eps/2 + \cdots + \eps/2^i.
		\end{equation}
		It will turn out that the final iteration $f_m$ is within $2\eps$ of $f|_{FC(G)}$, as desired. In order to pass from stage $i$ to stage $i+1$ we shall also have to introduce functions $y_i, g_i,$ and $h_i \ (i = 0, \ldots, m)$, beginning as follows.
		
		Define
		$$y_0 = \sum_{t \in A \cap Y_1} f(t) \delta_t, \qquad g_0 = \sum_{t \in A \setminus(W \cup Y_1)} f(t) \delta_t, 
		\qquad h_0 = \sum_{t \in G \setminus A}f(t) \delta_t,$$
		so that $ f= w + y_0+ g_0 +h_0$ and $\|h_0 \| \leq \eps/2$.
		By Proposition \ref{icc3}, for each $K\subset \subset G$ there exists $x_K \in C_G(W)$ such that $x_K (A \cap Y_1)x_K^{-1}$ has empty intersection with $K \cap Y_1$, from which it follows that
		$$x_K (A \cap Y_1)x_K^{-1} \cap K = \emptyset,$$
		since $x_K (A \cap Y_1)x_K^{-1} \subset Y_1$.
		By  the Banach--Alaoglu Theorem, we may pass to a subset $(x_{K_{\alpha}})$ of $(x_K)$ such that each of the limits
		$$f_1 := \lim_{w^*, \, \alpha} f^{x_{K_{\alpha}}}, \qquad \phi_1 : = \lim_{w^*, \, \alpha} g_0^{x_{K_{\alpha}}}, \qquad \text{and} \qquad \psi_1 : = \lim_{w^*, \, \alpha} h_0^{x_{K_{\alpha}}}$$
		exist.  
		By the choice of $(x_K)$ and Lemma \ref{w*lim} we have $\lim_{w^*, \, \alpha} y_0^{x_{K_\alpha}} = 0$.
		Note also that $w^{x_{K_\alpha}} = w$ for all $\alpha$, since each $x_K$ was chosen to centralise $W$.
		Hence we have 
		$$f_1 = w  + \phi_1 + \psi_1,$$
		and since $I$ is weak*-closed, $f_1 \in I$.
		Moreover, since $g_0^{x_{K_\alpha}}$ is supported on $Y_2 \cup \cdots \cup Y_m$ for all $\alpha$, 
		we have that 
		$$\supp \phi_1 \subset Y_2 \cup \cdots \cup Y_m$$
		as well. Also $\|\psi_1 \| \leq \eps/2 $ because each $\|h_0^{x_K} \| \leq \eps/2 \ (K \subset \subset G)$.
		Hence $f_1$ has a decomposition satisfying \eqref{eq1} and \eqref{eq2}.
		
		Now suppose that $1<i<m$ and that we have
		$$f_i = w+\phi_i+\psi_i \in I,$$
		for $\phi_i$ and $\psi_i$ satisfying \eqref{eq1} and \eqref{eq2} respectively. 
		Let $B \subset \subset G$ satisfy $\|\phi_i|_{G \setminus B} \| < \eps/2^{i+1}$ and define
		$$y_i = \sum_{t \in B \cap Y_{i+1}} \phi_i(t) \delta_t, \qquad g_i = \sum_{t \in B \setminus Y_{i+1}} \phi_i(t) \delta_t, 
		\qquad h_i= \psi_i + \sum_{t \in G \setminus B} \phi_i(t) \delta_t .$$
		Then $f_i = w + y_i + g_i + h_i$, and 
		$$\|h_i \| \leq \|\psi_i \| + \|\phi_i|_{G \setminus B} \| \leq \eps/2 + \cdots + \eps/2^i + \eps/2^{i+1}. $$
		For each $K \subset \subset G$ we may choose $x_K' \in C_G(W)$ such that 
		$$x_K' (B \cap Y_{i+1})x_K'^{-1} \cap K = \emptyset.$$
		Again, by the choice of $(x_K')$, we have $y_i^{x'_K} \to 0$ in the weak*-topology. As before, we may pass to a subnet $(x_{K_\beta}')$
		such that the limits
		$$\phi_{i+1} : = \lim_{w^*, \, \beta} g_i^{x'_{K_\beta}} \qquad \text{and} \qquad \psi_{i+1} := \lim_{w^*, \, \beta} h_i^{x'_{K_\beta}}.$$
		exist, and then
		$$f_{i+1} := \lim_{w^*, \, \beta} f_i^{x_{K_\beta}} = w + \phi_{i+1} + \psi_{i+1}.$$
		We have $\|\psi_{i+1} \| \leq \|h_i\| \leq \eps/2  + \cdots + \eps/2^{i+1}$, so that $\psi_{i+1}$ satisfies \eqref{eq2}. Also, $\phi_{i+1}$ satisfies \eqref{eq1} for similar reasons to $\phi_1$, and $f_{i+1} \in I$ because each $f_i^{x_{K_\beta}} \in I$.

		At the $m^{\rm th}$ stage we find that $\supp \phi_m = \emptyset$, and so $\phi_m$ must be zero. As such $f_m = w+ \psi_m$, and we have
		\begin{align*}
			\|f_m - f|_{FC(G)}\| &\leq \|f_m - w \|+ \|w- f|_{FC(G)} \|  
			= \|\psi_m\| + \|w- (w+h_0|_{FC(G)}) \|  \\
			 &= \|\psi_m \| + \|h_0|_{FC(G)} \| 
			 \leq ( \eps/2 + \cdots +\eps/2^m ) + \eps/2 < 2\eps. 
		\end{align*}
		This completes the proof.
	\end{proof}
	
	We now turn to the proofs of our main results. Firstly, using the previous lemma, we shall characterise weak*-simplicity of $\ell^1(G)$. In the proof $\ell_0^1(N)$ denotes the augmentation ideal of the subgroup $N$, namely $\{ f  \in \ell^1(N) : \sum_{s \in N} f(s) = 0 \}.$
	
	\begin{theorem} 	\label{iccthm}
		Let $G$ be a group. Then $\ell^1(G)$ is weak*-simple if and only if $G$ is ICC.
	\end{theorem}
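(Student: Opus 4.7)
The implication ICC $\Rightarrow$ weak*-simple is immediate from Lemma \ref{icc4}. Suppose $G$ is ICC and let $I$ be a non-zero weak*-closed ideal of $\ell^1(G)$. Pick $f \in I \setminus \{0\}$ and $g \in G$ with $f(g) \neq 0$. Since $FC(G) = \{e\}$, applying Lemma \ref{icc4} to $\delta_{g^{-1}}*f \in I$ yields $f(g)\delta_e = (\delta_{g^{-1}}*f)|_{FC(G)} \in I$, so $\delta_e \in I$ and hence $I = \ell^1(G)$.

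For the converse, suppose that $G$ is not ICC, so $FC(G) \neq \{e\}$. Mirroring the two-case split used in the proof of Theorem \ref{3.8}, I would in each case exhibit a non-trivial, weak*-closed, $G$-conjugation-invariant ideal $J$ of $\ell^1(N)$ for a suitable non-trivial normal subgroup $N \lhd G$, and then apply Lemma \ref{0.1} to lift $J$ to a non-trivial weak*-closed ideal of $\ell^1(G)$. In the case $FC(G)_T \neq \{e\}$, Dicman's Lemma \cite[Lemma 1.3]{T} provides a non-trivial finite normal subgroup $N$ of $G$; I would then take $J = \ell_0^1(N)$, the augmentation ideal, which is a proper, non-zero, weak*-closed ideal of $\ell^1(N)$ (being finite-dimensional and hence closed in the norm topology, which coincides with the weak*-topology here), and which is $G$-conjugation-invariant because conjugation by $g \in G$ permutes $N$ and so preserves the augmentation character $f \mapsto \sum_{n \in N} f(n)$.

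In the remaining case $FC(G)_T = \{e\}$, the group $FC(G)$ is torsion-free abelian by the same argument used in Theorem \ref{3.8}. For $x \in FC(G) \setminus \{e\}$, the subgroup $N := \langle ccl_G(x) \rangle$ is a normal subgroup of $G$ isomorphic to $\Z^n$ for some $n \in \N$, and I would copy the construction of Lemma \ref{3.7}: choose an open neighbourhood $U_0$ of the identity in $\widehat{N} \cong \T^n$ small enough that $E := \overline{\bigcap_{t \in G} U_0^t}$ satisfies $0 < m(E) < 1$. Since $N$ is $FC$-central in $G$, only finitely many automorphisms of $N$ are induced by conjugation, so the intersection defining $E$ is finite and $E$ is a compact, $G$-conjugation-invariant subset of $\widehat{N}$ of measure strictly between $0$ and $1$. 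The ideal $J := K_E \lhd \ell^1(N)$ supplied by Lemma \ref{yc} then has all the required properties.

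The serious content has already been isolated in Lemma \ref{icc4}; given that result the forward direction is essentially one line, and the converse is a direct $\ell^1$-translation of the constructions used for $CV_p(G)$ in Theorem \ref{3.8}. In fact the $\ell^1$ setting here is strictly simpler, since Lemma \ref{0.1} performs the lifting directly and obviates the module-theoretic machinery of Lemmas \ref{3.4} and \ref{3.6}.
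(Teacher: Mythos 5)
Your proof is correct and follows essentially the same route as the paper: the forward direction is the same translate-and-apply-Lemma~\ref{icc4} argument, and the converse uses the identical two-case split on $FC(G)_T$, with the augmentation ideal of a Dicman subgroup in one case and the $G$-invariant set $E \subset \widehat{N} \cong \T^n$ with $K_E$ from Lemma~\ref{yc} in the other, both lifted via Lemma~\ref{0.1}. Your closing observation that the $\ell^1$ setting avoids the module-theoretic machinery of Lemmas~\ref{3.4} and~\ref{3.6} matches the remark the paper itself makes before Lemma~\ref{3.4}.
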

	
	\begin{proof}
		Suppose that $G$ is an ICC group, and let $\{ 0 \} \neq I \lhd \ell^1(G)$ be weak*-closed. Then, after translating and scaling if necessary, we can find $f \in I$ with $f(e) = 1$. It follows from Lemma \ref{icc4} that $\delta_e \in I$, so that $I = \ell^1(G)$. As $I$ was arbitrary, $\ell^1(G)$ must be weak*-simple.
		
	    Now suppose that $G$ is not ICC so that $FC(G) \neq \{ e \}$. We shall show that $\ell^1(G)$ is not weak*-simple. 		
		There are two cases.
		
		\vskip 2mm
		\underline{Case 1: $FC(G)_T \neq \{e \}.$}
		Pick $x \in FC(G)_T \setminus \{ e \}$. By Dicman's Lemma \cite[Lemma 1.3]{T} there exists a finite normal subgroup $N$ of $G$ with $x \in N$, so that 
		$N \neq \{e \}$. Let $\mathcal{T}$ be a transversal for $N$ in $G$. Then by Lemma \ref{0.1} $I: = \bigoplus_{t \in \mathcal{T}} \delta_t*\ell_0^1(N) $ is a proper, non-trivial, weak*-closed ideal of $\ell^1(G)$.

		\vskip 2mm
		\underline{Case 2: $FC(G)_T = \{e \}.$} 
		Just as in Case 2 of the proof of Theorem \ref{3.8}, we get an $FC$-central, normal subgroup $N \lhd G$, which is isomorphic to $\Z^n$, for some $n \in \N$. Then, as in the proof of Lemma \ref{3.7}, we get a compact neighbourhood of the identity $E$ in $\widehat{N}$, which has $0<m(E)<1$, and which is invariant for the action of $G$ on $\widehat{N}$ induced by conjugation.
		%
		%
		As such, by Lemma \ref{yc}, 
		$$K_E = \{ f \in \ell^1(N) : \widehat{f}|_E = 0 \}$$
		is a non-trivial weak*-closed ideal of $\ell^1(N)$, and it is invariant for the conjugation action of $G$ because $E$ is.
		It follows from Lemma \ref{0.1} that $\ell^1(G)$ has a non-trivial weak*-closed ideal.
	\end{proof}
	
	The next proposition says that the weak*-closed ideals of $\ell^1(G)$ are determined by what happens on $FC(G)$.
	
	\begin{proposition}		\label{idealthm}
		Let $G$ be a group, and let $\mathcal{T}$ be a transversal for $FC(G)$ in $G$.
		Let $I \lhd \ell^1(G)$ be weak*-closed, and let 
		$$J = I \cap \ell^1(FC(G)).$$
		Then $J$ is a weak*-closed ideal of $\ell^1(FC(G))$ invariant for the conjugation action of $G$, and
		\begin{equation}	\label{eq3}
		I = \bigoplus_{t \in \mathcal{T}} \delta_t*J.
		\end{equation}
	\end{proposition}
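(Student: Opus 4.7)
The plan is to handle the two claims of the proposition separately: the structural properties of $J$ (weak*-closed, ideal, conjugation-invariant), which amount largely to bookkeeping, and the decomposition formula \eqref{eq3}, whose nontrivial direction will rest essentially on Lemma \ref{icc4}. The main obstacle is the inclusion $I \subseteq \bigoplus_{t \in \mathcal{T}} \delta_t * J$: one must show that restricting an element of $I$ to a single $FC(G)$-coset produces something still in $I$, which is exactly what Lemma \ref{icc4} delivers after one first translates the coset back to $FC(G)$ using the left ideal property.

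For the structural properties I would first observe that $\ell^1(FC(G))$ is itself weak*-closed in $\ell^1(G)$: viewing it as the subspace of functions supported on $FC(G)$, it equals $X^\perp$ for $X = \spn\{\delta_g : g \in G \setminus FC(G)\} \subset c_0(G)$. Hence $J = I \cap \ell^1(FC(G))$ is weak*-closed. That $J$ is an ideal of $\ell^1(FC(G))$ is immediate from the fact that $I$ is a two-sided ideal of $\ell^1(G)$ and $\ell^1(FC(G))$ is a subalgebra. For $G$-invariance, if $f \in J$ and $s \in G$, then $f^s = \delta_s * f * \delta_{s^{-1}} \in I$ since $I$ is two-sided, and $\supp f^s \subset FC(G)$ by normality, so $f^s \in J$.

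For \eqref{eq3}, the inclusion $\bigoplus_{t \in \mathcal{T}} \delta_t * J \subseteq I$ is easy: each $\delta_t * J$ lies in the left ideal $I$, the summands are supported in distinct cosets of $FC(G)$, and $I$ is norm-closed (being weak*-closed), so any $\ell^1$-convergent direct sum of elements from these summands belongs to $I$. For the reverse inclusion, take $f \in I$ and, for each $t \in \mathcal{T}$, set $f_t := \delta_{t^{-1}} * f|_{t \cdot FC(G)}$, which is supported on $FC(G)$ and satisfies $\delta_t * f_t = f|_{t \cdot FC(G)}$. To verify $f_t \in J$, I would note that $\delta_{t^{-1}} * f \in I$ because $I$ is a left ideal, and then Lemma \ref{icc4} gives $(\delta_{t^{-1}} * f)|_{FC(G)} \in I$. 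A one-line calculation using $(\delta_{t^{-1}} * f)(g) = f(tg)$ shows that this restriction coincides with $f_t$, so $f_t \in I \cap \ell^1(FC(G)) = J$. Summing over $\mathcal{T}$ and using $\ell^1$-convergence of $f = \sum_{t \in \mathcal{T}} f|_{t \cdot FC(G)}$ then yields $f = \sum_{t \in \mathcal{T}} \delta_t * f_t \in \bigoplus_{t \in \mathcal{T}} \delta_t * J$, completing the argument.
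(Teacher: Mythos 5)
Your proposal is correct and follows essentially the same route as the paper: the easy structural facts are handled directly, and the key inclusion $I \subseteq \bigoplus_{t \in \mathcal{T}} \delta_t * J$ is obtained exactly as in the paper by applying Lemma \ref{icc4} to $\delta_{t^{-1}}*f \in I$ and identifying $(\delta_{t^{-1}}*f)|_{FC(G)}$ with the coset component $f_t$. The extra details you supply (weak*-closedness of $\ell^1(FC(G))$ as an annihilator, $\ell^1$-convergence of the coset decomposition) are correct and merely make explicit what the paper leaves implicit.
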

	
	\begin{proof}
		First observe that $J$ is $G$-conjugation-invariant. Indeed, if $f \in J$ and $t \in G$, then $f^t \in I$ because $I$ is an ideal, but also $f^t \in \ell^1(FC(G))$ as $FC(G)$ is a normal subgroup. 
		
		Clearly the right hand side of \eqref{eq3} is contained in the left hand side, so we shall prove the reverse containment. Indeed, let $f \in I$. Then we can write
		$$f = \sum_{t \in \mathcal{T}} \delta_t*f_t,$$
		where $f_t \in \ell^1(FC(G))$ is given by $(\delta_{t^{-1}}*f)|_{FC(G)}$. For each $t \in \mathcal{T}$ we have $\delta_{t^{-1}}*f \in I$ since $I$ is an ideal, and hence $f_t \in I$ by Lemma \ref{icc4}. Since also $f_t \in \ell^1(FC(G))$ we have $f_t \in J$, and the conclusion follows.
	\end{proof}
	
	We can now prove Theorem \ref{thm2}.
	
	\begin{proof}[Proof of Theorem \ref{thm2}]
		This follows from Proposition \ref{idealthm} and Lemma \ref{0.1}.
	\end{proof}
	
	We now prove our classification theorem for groups with finite $FC$-centres, Theorem \ref{thm3}. Recall that given $\Omega \subset \widehat{FC(G)}$ we define
	$$K(\Omega) : = \bigcap_{\pi \in \Omega} \ker \pi \lhd \ell^1(FC(G)).$$
	For a finite group $G$ the ideals of $\ell^1(G)$ are all given by intersections of kernels of irreducible representations \cite[Theorem 38.7]{HR2}. This fact, together with Theorem \ref{thm2}, implies Theorem \ref{thm3}.
	
	
	
	\begin{proof}[Proof of Theorem \ref{thm3}]
		By Theorem \ref{thm2} it is enough to classify those ideals of $\ell^1(FC(G))$ which are invariant for the conjugation action of $G$; these are automatically weak*-closed since $\ell^1(FC(G))$ is finite-dimensional, by hypothesis. Let $J \lhd \ell^1(FC(G))$ be such an ideal. Then by \cite[Theorem 38.7]{HR2} there exists a subset $\mathcal{O} \subset \widehat{FC(G)}$ such that $J = K(\mathcal{O})$.
		Let $\Omega = \{ t \cdot \sigma : \sigma \in \mathcal{O}, \ t \in G \}.$ Then given $\pi = t \cdot \sigma \in \Omega$ we have $J \subset \ker \sigma$ so $J = J^t \subset \ker (t \cdot \sigma) = \ker \pi$. As such $J \subset \bigcap_{\pi \in \Omega} \ker \pi$, but also clearly $J \supset  \bigcap_{\pi \in \Omega} \ker \pi$. Hence $J = K( \Omega)$, and the result follows.
	\end{proof}

	\subsection*{Acknowledgements}
	I am grateful to Yemon Choi for useful email exchanges, and in particular for pointing out how to construct weak*-closed ideals of $\ell^1(\Z)$. I am also grateful to the referee for their useful comments that have improved the exposition of the article.

\end{document}